\documentclass[a4paper,reqno]{amsart}

\usepackage[a4paper,width={16cm},left=2.5cm,bottom=3cm, top=3cm]{geometry}
\usepackage{enumerate}
\usepackage{yfonts}
\usepackage{xfrac}
\usepackage[english]{babel}

\usepackage{a4wide}
\usepackage[dvipsnames]{xcolor}
\usepackage{amsmath}
\usepackage{amssymb}
\usepackage{amsfonts}
\usepackage{amsthm,graphicx}
\usepackage{comment}
\usepackage{csquotes}
\usepackage{esint}
\usepackage{hyperref}
\hypersetup{
	linktocpage=true,
	colorlinks=true,
	linkcolor=blue!70!black,
	citecolor=orange!40!red,
	urlcolor=magenta!80!black,
}

\usepackage{mathtools}
\mathtoolsset{showonlyrefs}

\numberwithin{equation}{section}
\newtheorem{theorem}{Theorem}[section]
\newtheorem{lemma}[theorem]{Lemma}

\newtheorem{proposition}[theorem]{Proposition}
\theoremstyle{definition}  
\newtheorem{definition}[theorem]{Definition}

\newtheorem{remark}[theorem]{Remark}

\newcommand{\mc}{\mathcal}
\newcommand{\mb}{\mathbb}

\newcommand{\la}{\lambda}

\newcommand{\pd}[2]{\frac{\partial#1}{\partial#2}}

\newcommand{\R}{\mb{R}}

\newcommand{\Tr}{\mathop{\rm{Tr}}}
\newcommand{\dive}{\mathop{\rm{div}}}

\newcommand{\La}{\Lambda}

\usepackage{subfig}
\usepackage{tikz}
\usepackage{xcolor}
\usepackage{pgfplots}
\pgfplotsset{compat=1.18}
\usepackage{mathrsfs}
\usetikzlibrary{arrows}
\usetikzlibrary{shadings}
\makeatletter

\begin{document}
\title[A rigidity result for a global vectorial free boundary problem]
{A rigidity result for a global vectorial free boundary problem with a linear datum}

\author[G. Siclari and B. Velichkov]{Giovanni Siclari and Bozhidar Velichkov}

\address{Bozhidar Velichkov
\newline \indent Dipartimento di Matematica
\newline \indent Universita di Pisa 
\newline\indent Largo Bruno Pontecorvo, 5, 56127 Pisa, Italy}
\email{bozhidar.velichkov@unipi.it}

\address{Giovanni Siclari 
\newline \indent Centro di Ricerca Matematica Ennio De Giorgi
\newline \indent Scuola Normale Superiore di Pisa
\newline\indent Piazza dei Cavalieri 3, 56126 Pisa, Italy}
\email{giovanni.siclari@sns.it}

\date{\today}

\begin{abstract}
In this paper we completely classify the minimizers of a measure constraint global vectorial free boundary problem with a linear datum. More precisely, we show that there exists a unique minimizer whose coincidence set is an ellipsoid. We are motivated by the connection of the problem with the blow-ups of the vectorial Bernoulli free boundary problem and the obstacle problem.
\end{abstract}

\maketitle

{\bf Keywords.} Vectorial free boundary, Global minimizers

\medskip 

{\bf MSC classification.}  
35R35

\section{Introduction}\label{sec_intro}

Given $d \ge 2$ and $k \ge d$, we consider the functional space
\begin{equation}\label{def_D12}
D^{1,2}(\R^d, \R^k):=
\begin{cases}
\{V \in L^{2^*}(\R^d,\R^k): \nabla V\in L^2(\R^d,\R^{k,d})\}, & \text{ if } d \ge 3,\\
\{V \in L^{2,\infty}(\R^2,\R^k): \nabla V\in L^2(\R^2,\R^{k,2})\}, & \text{ if } d = 2,
\end{cases}
\end{equation}
where $L^{2,\infty}(\R^2,\R^k)$ is the Lorentz space  
\begin{equation}\label{def_L2infty}
L^{2,\infty}(\R^2,\R^k):=\{V:\R^2 \to \R^k \text{ measurable and  such that } [V]_{L^{2,\infty}(\R^2,\R^k)}<+\infty\},
\end{equation}
with 
\begin{equation}\label{def_quasinorm_L2infty}
[V]_{L^{2,\infty}(\R^2,\R^k)}:=\sup_{\la >0} \la |\{|V|>\la\}|^{1/2}.
\end{equation}
For any $d\ge 2$ and any $k\ge 1$, we have the inclusion $D^{1,2}(\R^d,\R^k)\subset  H^1_{loc}(\R^d,\R^k)$.\medskip

Given a $k \times d$ matrix $A \in \R^{k,d}$ with 
$${\rm{rk}}(A)=d,$$  
we are interested in the variational problem 
\begin{equation}\label{prob_min}
 \La^*(A):=\inf\left\{\int_{\R^d} |\nabla V|^2\, dx:V\in D^{1,2}(\R^d, \R^k),  |\{V(x)= Ax\}|=1\right\},
\end{equation}
which we introduced \eqref{prob_min} in \cite{SV_blowups} in order to classify the global singular homogeneous solutions of the vectorial Bernoulli free boundary problem (we discuss the connection with the vectorial Bernoulli problem in Subsection \ref{s:intro:sub:motivations}). In this paper, we compute the exact value of $\Lambda^\ast(A)$ as an analytic expression in terms of the coefficients of $A$. Our main result is the following: 
\begin{theorem}\label{theo_main_1}
Let $d \ge 2$, $k \ge d$,  $A \in \R^{k,d}$ be a $k \times d$ matrix with ${\rm{rk}}(A)=d$.  Then
\begin{equation}\label{eq_La_theo_main}
\La^*(A)=(\Tr(\sqrt{A^TA}))^2.
\end{equation}
\end{theorem}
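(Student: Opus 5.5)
The plan is to reduce to a square, diagonal, positive datum, then get the lower bound from a divergence identity and the matching upper bound from the Newtonian potential of an ellipsoid.

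\textbf{Reduction.} Write the polar decomposition $A=QS$, with $S=\sqrt{A^TA}\in\R^{d,d}$ symmetric positive definite and $Q\in\R^{k,d}$ satisfying $Q^TQ=I_d$. Take $V$ admissible for $\La^*(A)$ and set $W:=Q^TV$. Then $|\nabla W|\le|\nabla V|$ pointwise, and $\{V=Ax\}\subset\{W=Sx\}$. Conversely, if $W$ is admissible for $S$, then $QW$ is admissible for $A$ with the same energy. Hence $\La^*(A)=\La^*(S)$, reading the $d\times d$ problem in the obvious way when $k>d$.

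Next write $S=R^T\La R$ with $R$ a rotation and $\La={\rm diag}(\la_1,\dots,\la_d)$, $\la_i>0$. Changing variables $x\mapsto Rx$ and replacing $W$ by $RW$ shows $\La^*(S)=\La^*(\La)$. Since $\Tr S=\Tr\La=\sum_i\la_i$, it suffices to prove $\La^*(\La)=(\Tr\La)^2$ for $k=d$ and $\La$ diagonal and positive.

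\textbf{Lower bound.} For $V\in C^\infty_c(\R^d,\R^d)$ we have the identity
\[
\int_{\R^d}|\nabla V|^2\,dx=\int_{\R^d}(\dive V)^2\,dx+\frac12\int_{\R^d}|\nabla V-\nabla V^T|^2\,dx,
\]
which follows by integrating by parts twice, $\int \partial_jV_i\,\partial_iV_j=\int\partial_iV_i\,\partial_jV_j$. I would extend it to $D^{1,2}(\R^d,\R^d)$ by density of $C^\infty_c$ in the homogeneous norm. In $d=2$ this has to be checked with the $L^{2,\infty}$ decay built into the definition, which rules out nonzero constants; this is routine but should be stated.

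Let $V$ be admissible and $\Omega=\{V=\La x\}$. By the Stampacchia property of Sobolev functions, $\nabla V=\La$ a.e. on $\Omega$, so $\dive V=\Tr\La$ a.e. on $\Omega$. Therefore
\[
\int_{\R^d}|\nabla V|^2\,dx\ge\int_\Omega(\dive V)^2\,dx=(\Tr\La)^2|\Omega|=(\Tr\La)^2.
\]

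\textbf{Upper bound.} For an ellipsoid $E=\{\sum_i x_i^2/a_i^2<1\}$ let $N_E=\Phi*\mathbf 1_E$ be its Newtonian potential, normalized so that $-\Delta N_E=\mathbf 1_E$. By the classical ellipsoid-potential theorem, $N_E$ is a quadratic polynomial in $E$. Hence $\nabla N_E=-Mx$ in $E$, with $M={\rm diag}(m_1,\dots,m_d)$, $m_i>0$, $\sum_i m_i=1$. The $m_i$ are explicit elliptic integrals of $(a_1,\dots,a_d)$.

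Assume we can choose the axes so that $m_i=\la_i/\Tr\La$ and $|E|=1$. Set $V:=-(\Tr\La)\nabla N_E$. Then $V=\La x$ on $E$. Its decay is that of $\nabla\Phi$, so $V\in L^{2^*}$ for $d\ge3$ and $V\in L^{2,\infty}$ for $d=2$, and $V\in D^{1,2}$. Since $V$ is a gradient, the curl term in the identity above vanishes. This gives
\[
\int|\nabla V|^2=\int(\dive V)^2=(\Tr\La)^2\int\mathbf 1_E=(\Tr\La)^2,
\]
which matches the lower bound. It also identifies the minimizer: equality in the lower bound forces $V$ curl-free with $\dive V=0$ off $\Omega$.

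\textbf{Main obstacle.} The hard step is showing that the map $(a_1,\dots,a_d)\mapsto(m_1,\dots,m_d)$ from normalized ellipsoids to the open simplex is onto. I would first establish monotonicity: $m_i$ is decreasing in $a_i$ relative to the other axes. I would then check boundary behavior, namely $m_i\to0$ as $a_i/a_j\to\infty$ and $m_i\to1$ as $a_i/a_j\to0$ for all $j\ne i$. A degree or continuity argument on the simplex, or a minimization of a strictly convex auxiliary functional in $\log a_i$, then gives surjectivity, and in fact uniqueness of the ellipsoid. The density argument in $d=2$ is the secondary technical point.
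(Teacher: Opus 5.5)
Your architecture is the paper's. The reduction $W=Q^TV$ is the paper's $U^TV$ with $U=AS^{-1}$. The paper also uses $\nabla W=S$ a.e.\ on the contact set, and it gets $\int|\dive W|^2\le\int|\nabla W|^2$ by Plancherel (it proves your div--curl identity later, in the rigidity section). Its competitor is likewise $-\Tr(S)\,U\nabla N_E$, with $\int|D^2N_E|^2=\int|\Delta N_E|^2=|E|$. Your lower bound is complete. The step you call the main obstacle, however, is the real content of the upper bound, and there your proposal names a strategy rather than carrying it out. Option (a) needs more than monotonicity of each $m_i$ in $a_i$: your boundary behaviour gives properness of $a\mapsto m$ onto the open simplex, but you also need the full Jacobian to be nondegenerate modulo dilations, which is the computation below.

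The missing idea, which is how the paper proves its ellipsoid lemma, is that the $m_i$ form a gradient. Put $a_i^{-2}=e^{p_i}$. Use $\fint_{\mathbb S^{d-1}}f\,d\mathcal H^{d-1}=\pi^{-d/2}\int_{\R^d}f(x)e^{-|x|^2}dx$ for $0$-homogeneous $f$, together with $q^{-1}=\int_0^\infty e^{-tq}\,dt$. Then the elliptic integrals become $m_i=\fint_{\mathbb S^{d-1}}e^{p_i}\theta_i^2\big/\sum_je^{p_j}\theta_j^2$, that is, $m=\nabla\Phi_0(p)$ with $\Phi_0(p)=\fint_{\mathbb S^{d-1}}\log\bigl(\sum_je^{p_j}\theta_j^2\bigr)$.

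Writing $w_i$ for the integrand of $m_i$, one gets $z\cdot D^2\Phi_0\,z=\frac12\fint\sum_{i,j}w_iw_j(z_i-z_j)^2$. This is positive on $H\setminus\{0\}$, where $H=\{\sum_jp_j=0\}$. With $\mu_j=\la_j/\Tr\La$, the function $\Phi_0(p)-\mu\cdot p$ is coercive on $H$, because $\Phi_0(p)\ge\max_ip_i-C$ and $\max_ip_i-\mu\cdot p\ge\delta|p|$ on $H$ when all $\mu_j>0$. At its minimum the Lagrange multiplier vanishes, since $\sum_im_i=1=\sum_i\mu_i$. This gives existence, and uniqueness up to dilation, of the ellipsoid. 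The volume is then fixed using the $0$-homogeneity of $m$ in $a$. In $d=2$ everything is explicit: $m=(a_2,a_1)/(a_1+a_2)$.

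There is also a smaller omission. $\La^*$ requires $|\{V=Ax\}|=1$, but your competitor only has contact set containing $E$, and your reduction uses only $\{V=Ax\}\subset\{W=Sx\}$. The fix is immediate from your own bound $\int|\nabla V|^2\ge(\Tr\La)^2|\{V=\La x\}|$: applied to a $V$ of energy $(\Tr\La)^2$, it forces $|\{V=\La x\}|=1$. The paper handles this instead with its scaling lemma, which shows that the constraint $\ge1$ gives the same infimum.
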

In order to prove this  theorem, we  characterize the minimizers of \eqref{prob_min}. We prove that,  in any dimension $d\ge2$ and for any $k\times d$ matrix $A$ of rank $d$, there exists a unique minimizer $V$ to \eqref{prob_min} and that its contact set $\{V(x)=Ax\}$ is an ellipsoid.  
To give the complete statement, we need to fix some notation and do some preliminary considerations.\medskip 

\noindent\underline{\it About $A^TA$.} First of all, we notice that if $\text{rk}(A)=d$, then
\begin{equation}\label{e:ATA-properties}
\text{$A^TA \in \R^{d,d}$ is symmetric, positive definite and invertible.} 
\end{equation}
Indeed, the positivity of $A^TA$ follows from the observation that $A^TAx \cdot x=|Ax|^2$ for any $x\in\R^d$. The same formula implies that $\text{Ker}(A)=\text{Ker}(A^TA)$; in particular, if $\text{Ker}(A^TA)\neq\{0\}$, then $\text{rk}(A)<d$, which proves that for matrices satisfying $\text{rk}(A)=d$, we have that $A^TA$ is invertible.\medskip

\noindent\underline{\it Definition of $S$ and $U$.} Thanks to \eqref{e:ATA-properties}, we can define the $d\times d$ and $k\times d$ matrices 
\begin{equation}\label{def_SU}
S:=(A^TA)^{1/2} \in \R^{d,d} \quad \text{ and } \quad  U:=AS^{-1} \in \R^{k,d}.
\end{equation}
We notice that, by construction 
\begin{center}
$S\in \mathbb R^{d\times d}$ is symmetric, positive and invertible. 
\end{center}
Furthermore,
$U^TU=S^{-T}A^TAS^{-1}=S^{-1}S^2S^{-1}={\rm{\mathop {Id}}_d}$ so that $U:\R^d\to\R^k$ is an isometry.\medskip

\noindent\underline{\it Definition of $B$ and $R$.} Let us define the matrix
\begin{equation}\label{def_BRD}
B:=\frac{S}{\Tr(S)}\quad  \text{ and let us write } B=RDR^T,
\end{equation}
where $D \in \R^{d,d}$ is a diagonal matrix and $R \in \R^{d,d}$ is an orthogonal matrix.\medskip

\noindent\underline{\it Definition of $E_A$ and $N_{E_A}$.}
Let $\Gamma$ be the fundamental solution of the Laplacian in $\R^d$: 
\begin{equation}\label{def_Gamma}
\Gamma(x)=
\begin{cases}
\displaystyle\frac{1}{d(d-2) \omega_d}|x|^{2-d} &\text{ if } d\ge 3,\\
\displaystyle\frac{1}{2 \pi}\log(1/|x|) &\text{ if } d=2.
\end{cases}
\end{equation}
For any bounded Lebesgue measurable set $E\subset\R^d$, we set 
\begin{equation}\label{def_NE}
N_E(x)=(\chi_{E}\ast N)(x) =\int_E \Gamma(x-y) \, dy,
\end{equation}
and we notice that the function $N_E$ is a weak solution to the equation
\begin{equation}\label{eq_NE}
-\Delta N_E=\chi_{E}\quad\text{in}\quad \R^d.
\end{equation}
For any $a=(a_1,\dots,a_d) \in \R^d$ with $a_i>0$, let us define the ellipsoid centered in $0$
\begin{equation}\label{def_elips}
E(a):=\left\{x \in \R^d:\sum_{i=1}^d \frac{x_i^2}{a_i^2}<1\right\}.
\end{equation}
In Lemmas \ref{lemma_elipsoid_dge3} and \ref{lemma_elipsoid_d2}, we will prove that there exists a  unique ellipsoid $E_A$ centered in $0$ 
with volume $1$ such that, letting $B$ as in \eqref{def_BRD} and  $N_{E_A}$ as in \eqref{def_NE} for $E=E_A$,
\begin{equation}
\nabla N_{E_A}(x)=-Bx \quad \text{ in } E_A.
\end{equation}
Actually, letting $R$ as in \eqref{def_BRD},
\begin{equation}\label{def_EA}
E_A=R E(\omega_d^{-1/d}a^*),
\end{equation}
where the vector $a^* \in (0,+\infty)^d$  is characterized in Lemmas   \ref{lemma_elipsoid_dge3} and \ref{lemma_elipsoid_d2}. If $d=2$ we actually have a completely explicit expression for $a^*$, see \eqref{def_EA_d2}.\medskip

We are now ready to give a precise statement of our second main result.

\begin{theorem}\label{theo_main_2}
Let $d \ge 2$, $k \ge d$,  $A \in \R^{k,d}$ be a $k \times d$ matrix with ${\rm{rk}}(A)=d$. Let $U$ be as in \eqref{def_SU}, $N_{E_A}$ as in  \eqref{def_NE} with $E_A$  as in \eqref{def_EA}. Then, the function 
\begin{equation}\label{eq_Va_theo_main}
V_A:=-(\Tr(\sqrt{A^TA}))U \nabla N_{E_A},
\end{equation}
is the unique minimizer of \eqref{prob_min} and its contact set is precisely the ellipsoid $E_A$, that is:
\begin{equation}\label{eq_concidence_set_theo_main}
\{V_A(x)=Ax\}=\overline{E_A}.
\end{equation}
\end{theorem}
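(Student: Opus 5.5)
Our plan is to first reduce to the case $A=S$ (i.e. $k=d$, $U=\mathrm{Id}$ up to an isometric embedding), and then prove a sharp lower bound that is saturated exactly by $V_A$. We write $V=UW+Z$ with $W:=U^TV\in D^{1,2}(\R^d,\R^d)$ and $Z:=(\mathrm{Id}-UU^T)V$. Since $U$ is an isometry, $|\nabla V|^2=|\nabla W|^2+|\nabla Z|^2$. Moreover $\{V=Ax\}=\{W=Sx\}\cap\{Z=0\}\subset\{W=Sx\}$, because $Ax=USx$. Hence $\La^*(A)\ge\La^*(S)$, with equality forcing $Z\equiv0$: indeed $Z\in D^{1,2}$ with $\nabla Z=0$ implies $Z$ is constant, and the decay at infinity gives $Z=0$. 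Conversely, if $W$ is admissible for $S$, then $UW$ is admissible for $A$. So it suffices to treat $A=S$ symmetric positive definite, and to show that $\La^*(S)=(\Tr S)^2$ with unique minimizer $-(\Tr S)\nabla N_{E_A}$.

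\emph{Lower bound.} Let $W$ be admissible with $E:=\{W=Sx\}$, $|E|=1$. On $E$ we have $\nabla W=S$ a.e. (Sobolev functions agree in gradient on the set where they coincide). We use the divergence of $W$: by Cauchy--Schwarz and $|\nabla W|^2\ge\frac1d|\dive W|^2$ this only gives a weak bound, so instead we test against the good competitor. Let $\Phi:=-\nabla N_{E_A}$, so that $\Phi(x)=Bx$ on $E_A$ and $\nabla\Phi=-D^2N_{E_A}$. We aim at an inequality of the form
\begin{equation}
\int|\nabla W|^2\ge 2\int\nabla W:\nabla\Psi-\int|\nabla\Psi|^2,\qquad \Psi:=(\Tr S)\Phi,
\end{equation}
which is just $\int|\nabla(W-\Psi)|^2\ge0$. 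Integrating by parts, $\int\nabla W:\nabla\Psi=-(\Tr S)\int W\cdot\Delta\Phi=(\Tr S)\int W\cdot\nabla\chi_{E_A}$ in the distributional sense. Here $\Delta\Phi=-\nabla(\Delta N)=\nabla\chi_{E_A}$, which is a surface measure $-\nu\,\mathcal H^{d-1}$ on $\partial E_A$. The key step is therefore to show
\begin{equation}
\int_{\partial E_A}W\cdot\nu\,d\mathcal H^{d-1}\ \ge\ \text{(the value attained by }\Psi),
\end{equation}
equivalently, to bound $\int_{E_A}\dive W$ from below. Directly this fails, since $W$ is only controlled on $E$, not on $E_A$. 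So a better route is the dual one: we should choose the test field adapted to $E$. For any admissible $W$, $\int|\nabla W|^2\ge\int_E|S|^2$ is too weak. Instead, we would write $\dive W$ and use the Hodge-type identity $\int|\nabla W|^2=\int|\dive W|^2+\frac12\int|\mathrm{curl}\,W|^2\ge\int(\dive W)^2$ for $W\in D^{1,2}$. Since $\dive W=\Tr S$ on $E$, we get $\int(\dive W)^2\ge(\Tr S)^2|E|=(\Tr S)^2$. This yields $\La^*(S)\ge(\Tr S)^2$ immediately.

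\emph{Equality and uniqueness.} Equality forces $\mathrm{curl}\,W=0$ and $\dive W=(\Tr S)\chi_E$. Then $W=\nabla u$ with $\Delta u=(\Tr S)\chi_E$, so $W=-(\Tr S)\nabla N_E$, using the decay in $D^{1,2}$. The constraint then reads $\nabla N_E(x)=-Bx$ on $E$, i.e. $N_E+\frac12 Bx\cdot x$ is constant on $E$. This is a quadrature/null-quadrature type identity. We then need to show that the only volume-one set with this property is $E_A$; this is the main obstacle. We would compare with $E_A$: the function $h:=N_E-N_{E_A}$ satisfies $-\Delta h=\chi_E-\chi_{E_A}$, and $\nabla h=0$ on $E\cap E_A$. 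Testing against $h$ and using the obstacle-problem structure (both $N+\frac12 Bx\cdot x-c$ are solutions of an obstacle-type problem with the same quadratic at infinity), we expect the classical uniqueness of the obstacle problem solution with prescribed asymptotics to give $E=E_A$ up to null sets. For existence, $V_A$ is admissible and attains the bound: $\dive V_A=(\Tr S)\chi_{E_A}$, it is a gradient, and by Lemmas \ref{lemma_elipsoid_dge3}--\ref{lemma_elipsoid_d2} it equals $Ax$ on $E_A$. Finally, $\{V_A=Ax\}=\overline{E_A}$ requires checking that outside $\overline{E_A}$ the function $N_{E_A}+\frac12 Bx\cdot x$ is strictly non-stationary. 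We would obtain this from strict convexity/monotonicity of the exterior Newtonian potential of an ellipsoid (the Dive--Nikliborc-type formula in those lemmas).
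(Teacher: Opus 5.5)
Your lower bound and equality analysis are correct and match the paper's. The Hodge identity $\int|\nabla W|^2=\int|\dive W|^2+\frac12\int|\mathrm{curl}\,W|^2$ forces $\mathrm{curl}\,W=0$ and $\dive W=\Tr(S)\chi_E$. Hence $W=-\Tr(S)\nabla N_E$ and $\nabla N_E(x)=-Bx$ on $E$. Your splitting $V=UW+Z$ is the paper's projection argument, just moved to the front.

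\textbf{The gap} is the step you yourself call the main obstacle: proving $E=E_A$ up to null sets. The obstacle-problem route does not work as written, for three reasons.

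First, $\nabla N_E=-Bx$ on a merely measurable $E$ does not make $u_E:=N_E+\frac12x\cdot Bx$ constant on $E$, since a priori $E$ need not be connected.

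Second, even if $u_E=c_E$ on $E$, nothing gives $u_E\ge c_E$ on $\R^d$. Without this sign, $w:=u_E-c_E$ only satisfies $\Delta w=\chi_{\R^d\setminus E}$ with $w=|\nabla w|=0$ on $E$. That is not the obstacle problem, so neither comparison nor the classification of coincidence sets (Dive, DiBenedetto--Friedman, Friedman--Sakai) applies.

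Third, the quadratic part $\frac12x\cdot Bx$ at infinity is the same for every admissible $E$, so ``same asymptotics'' cannot single out $E_A$.

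Your test with $h=N_E-N_{E_A}$ gives $\int|\nabla h|^2=\int(\chi_E-\chi_{E_A})h$. This would indeed be $\le0$, forcing $E=E_A$, if you knew $u_E\ge c_E$ everywhere with $u_E=c_E$ on $E$; the analogous fact does hold for $E_A$. But that one-sided information is exactly what is missing.

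\textbf{How the paper closes this} is a Hopf-type touching argument in the spirit of Eberle--Weiss. It needs only two things. One is the identity $\nabla N_E=-Bx$ at points of $E$, which holds pointwise by continuity of $\nabla N_E$. The other is boundedness of $E$. The paper imports boundedness from its companion work, but you can also get it directly: $|\nabla N_E|\le(|E|/\omega_d)^{1/d}$ for any set of finite measure, and $B$ is positive definite.

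The argument runs as follows.
\begin{enumerate}
\item Write $E_A=\{x\cdot Gx<1\}$. Take the smallest $t^*$ with $E\subset\overline{t^*E_A}$, and a point $x_0\in E\cap\partial(t^*E_A)$.
\item By homogeneity, $\nabla N_{tE_A}(x)=-Bx$ on $\overline{tE_A}$, so $\nabla N_{t^*E_A\setminus E}(x_0)=0$.
\item But $Gx_0\cdot(x_0-y)>0$ for every $y\in t^*E_A$, so $Gx_0\cdot\nabla N_{t^*E_A\setminus E}(x_0)=-\frac1{d\omega_d}\int_{t^*E_A\setminus E}\frac{Gx_0\cdot(x_0-y)}{|x_0-y|^d}\,dy<0$ unless $|t^*E_A\setminus E|=0$.
\item Hence $E=t^*E_A$ a.e., and $|E|=1$ gives $t^*=1$.
\end{enumerate}

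The same argument applied to $tE_A\setminus E_A$, with $t=\sqrt{x\cdot Gx}>1$, proves $\{V_A=Ax\}=\overline{E_A}$. Your alternative for this last step via the exterior ellipsoid formula would also work. However, you would have to import that formula: Lemmas~\ref{lemma_elipsoid_dge3}--\ref{lemma_elipsoid_d2} only describe $\nabla N_{E(a)}$ inside $E(a)$.
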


In the following sections, we highlight several motivations to investigate \eqref{prob_min}, in particular the connection with the blow-ups of the vectorial Bernoulli free boundary problem and the obstacle problem.

\subsection{Applications to the vectorial Bernoulli free boundary problem}\label{s:intro:sub:motivations}
The problem \eqref{prob_min} was introduced in \cite{SV_blowups} as a characterization of the singular blow-ups of the vectorial Bernoulli free boundary problem. From this perspective, Theorems \ref{theo_main_1} and \ref{theo_main_2} have some immediate applications in the theory of the vectorial Bernoulli problem. In particular, Theorem \ref{theo_main_1} provides a criteria for the global minimality of a linear function (Theorem \ref{main-cor}) and a first order free boundary condition at the singular points of a vectorial minimizer (Theorem \ref{main-cor-blow-up}), which was previously unknown.\medskip

Let $D$ be an open set in $\R^d$ and let $\Lambda>0$. For every $W\in H^1(D;\R^k)$ we define the vectorial functional 
\begin{equation}\label{e:vectorial-functional-in-D}
J_\Lambda(W,D):=\int_{D} |\nabla W|^2\, dx+\Lambda |\Omega_W\cap D|,
\end{equation}
where for any $W:D\to\R^k$ we set 
\begin{equation}\label{e:positivity-set-defintion}
\Omega_W:=\{W\neq 0\}.
\end{equation}
\begin{definition}[Vectorial minimizers in $D$]\label{def:minimizers-in-D}
Let $\Lambda\ge 0$. We say that $U\in H^1(D,\R^k)$ is a minimizer of the vectorial functional $J_\Lambda$ in the open set $D\subset\R^d$, if  
\begin{equation}\label{prob_min_vec}
J_\Lambda(U,D)\le J_\Lambda(W,D)\quad\text{for every}\quad W\in H^1(D,\R^k)\quad\text{with}\quad W-U\in H^1_{0}(D,\R^k).
\end{equation}
We will also say that $U$ is a minimizer (or variational solution) of the vectorial Bernoulli problem.
\end{definition}
\begin{definition}[Global vectorial minimizers]\label{def:global-solution}
Let $\Lambda\ge 0$. We say that $U\in H^1_{loc}(\R^d,\R^k)$ is a global minimizer of the vectorial functional $J_\Lambda$, if $U$ is a minimizer of the vectorial Bernoulli problem in $B_R$ (in the sense of Defintion \ref{def:minimizers-in-D}) for all $R>0$.  
\end{definition}

By blow-ups we mean the following.

\begin{definition}[Blow-ups]\label{def:blow_up}
Letting $x_0 \in \partial \Omega_{U}$, we say that $U_0 \in H^1_{loc}(\R^d,\R^k)$ is a blow-up for $U$ at $x_0$ if there exists a sequence $r_n \to 0^+$ such that  
$\frac{1}{r_n}U(x_0+r_nx) \to U_0(x)$ uniformly on compact sets in $\R^d$.
We indicate with $\mc{BU}_U(x_0)$  the sets of all  blow-ups of $U$ at $x_0$.
\end{definition}

The properties of the minimizers of the vectorial Bernoulli problem have been studied by several authors (see the survey \cite{TV_survey} for a detailed overview). 
Letting  $U$ be a minimizer of \eqref{prob_min_vec} and  $\Omega^{(\gamma)}_{U}$ be the set of all points having density $\gamma \in [0,1]$,  we can divide the free boundary into three parts:
\begin{align}\label{def_reg}
&\mathrm{Reg}(\partial \Omega_{U}) := \Omega^{(1/2)}_{U} \cap D,  \\  \label{def_sing2}
&\mathrm{Sing}_2(\partial \Omega_{U}) :=  \Omega^{(1)}_{U} \cap \partial  \Omega_{U} \cap D, \\  \label{def_sing1}
&\mathrm{Sing}_1(\partial \Omega_{U}) := (\partial  \Omega_{U} \cap D) \setminus 
\big( \mathrm{Sing}_2(\partial  \Omega_{U}) \cup \mathrm{Reg}(\partial  \Omega_{U}) \big).
\end{align}
At the moment the main open problem  about the regularity of the free boundary $\partial \Omega_{U}$ concerns $\mathrm{Sing}_2(\partial \Omega_{U})$ about which little is known, see for example \cite{TV_survey}. As detailed in \cite{SV_blowups}, a step towards understanding the structure of the free boundary $\partial\Omega_U$ around points of $\mathrm{Sing}_2(\partial \Omega_{U})$ is to characterize the blow-up limits at such points. A first step in this direction was made in  \cite[Section 2]{MTV_reg_vect}, where it was shown that, if $x_0\in \mathrm{Sing}_2(\partial \Omega_{U})$, then any  $U_0 \in \mc{BU}_U(x_0)$  is a linear map, that is, 
\begin{equation}
U_0(x)= A x \quad \text{ where } A=(a_{i,j}) \in \R^{k,d},
\end{equation}
and a global minimizer  of the vectorial Bernoulli free boundary problem.\medskip

In order to establish which linear maps $Ax$ are global vectorial minimizers, in \cite{SV_blowups} we introduced the quantity 
\begin{equation}\label{def_La}
\Lambda^{\ast\ast}(A)=\sup\{\Lambda>0:Ax \text{ is a global minimizer of } J_\Lambda\text{ in }\R^d\},
\end{equation}
which is well-defined for any non-zero $k\times d$ matrix $A$ independently of its rank.
We notice that this quantity characterizes the values of $\Lambda>0$ for which the linear function $Ax$ is a global minimizer of the vectorial functional $J_\Lambda$ as one can easily check that we have the following dichotomy:
\begin{itemize}
\item if $\Lambda\le \Lambda^{\ast\ast}(A)$, then $Ax$ is a global minimizer of the vectorial functional $J_\Lambda$;
\item conversely, if $\Lambda> \Lambda^{\ast\ast}(A)$, then  $Ax$ is not a global minimizer of $J_\Lambda$.
\end{itemize}
Indeed, when $\Lambda>\Lambda^{\ast\ast}(A)$, $Ax$ is not minimizing by the definition of $\Lambda^{\ast\ast}(A)$. For the case $\Lambda<\Lambda^{\ast\ast}(A)$, it is sufficient to observe that if a linear map $Ax$ minimizes $J_{\La'}$ for some $\La'>0$ then, since $|\Omega_{Ax}\cap B_R|=|B_R|$,  it minimizes $J_{\La''}$ for any $\La'' \le \La'$. Finally, for the limit case $\Lambda=\Lambda^{\ast\ast}(A),$ we only need to observe the set of $\Lambda$ for which $Ax$ is not a global minimizer of $J_\Lambda$ is an open subset of $(0,+\infty)$.\medskip 

In \cite{SV_blowups} we characterized the value of $\Lambda^{\ast\ast}(A)$ defined in \eqref{def_La} in terms of the shape optimization problem \eqref{prob_min}. More precisely, in \cite{SV_blowups}, we proved the following theorem.

\begin{theorem}[\cite{SV_blowups}]\label{theo_La_old_paper}
Let $A$ be a $k \times d$ matrix with ${\rm{rk}}(A)=n$ and  $1\le n\le d$. Let $Q \in \R^{d,d}$ be an orthogonal matrix such that 
$A=\begin{bmatrix} A_1, 0\end{bmatrix}Q$, for some matrix $A_1 \in \R^{k,n}$ of rank $n$.
Then, the following holds: 
\begin{itemize}
\item if ${\rm{rk}}(A)>1$, then 
\begin{equation}
\Lambda^{\ast\ast}(A)=\Lambda^{\ast\ast}(A_1)=\Lambda^{\ast}(A_1), 
\end{equation}
where $\Lambda^\ast(A_1)$ is given by \eqref{prob_min}; in particular, 
\begin{equation}
\Lambda^{\ast\ast}(A)>\Tr(A^TA);
\end{equation}
\item if  ${\rm{rk}}(A)=1$, then
\begin{equation}\label{e:old-teo-rank1}
\Lambda^{\ast\ast}(A)=\Tr(A^TA).
\end{equation}
\end{itemize}
\end{theorem}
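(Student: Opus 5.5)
This theorem is quoted from \cite{SV_blowups}, so the plan below follows the structure one would use there; the orthogonal matrix $Q$ allows us to reduce to a full-rank block.

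\textbf{Reduction to full rank.} For a rotation $Q$ we have $J_\Lambda(W\circ Q^{T},B_R)=J_\Lambda(W,B_R)$. Hence $Ax$ is a global minimizer if and only if $[A_1,0]x$ is one, so we may assume $A=[A_1,0]$. In that case the map depends only on the first $n$ variables.

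First I would show $\Lambda^{\ast\ast}(A)=\Lambda^{\ast\ast}(A_1)$. The idea is a cylindrical extension/slicing argument. If $Ax$ is a global minimizer in $\R^d$, then $A_1x'$ is a minimizer in $\R^n$. To see this, take a competitor in $B_R'\subset\R^n$ and extend it in the extra $d-n$ variables with a cutoff on a long cylinder $B_R'\times(-L,L)^{d-n}$. The gains in the interior scale like $L^{d-n}$, while the boundary-layer costs are $O(L^{d-n-1})$. Letting $L\to\infty$ gives the claim. For the converse, I would use the minimality of $A_1$ on each $n$-dimensional slice, integrate over the slices, and handle the boundary data by the same cutoff argument. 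This part is routine.

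\textbf{Characterization via $\Lambda^\ast$ for rank $n\ge2$.} Here $A$ is $k\times n$ of full rank. Set $W=Ax-V$. The competitor $W$ vanishes exactly on $\{V=Ax\}$, and its energy is
\[
\int_{B_R}|\nabla W|^2=\int_{B_R}|A|^2-2\int_{B_R}\nabla V:A+\int_{B_R}|\nabla V|^2 .
\]
Because $V$ decays at infinity (it lies in $D^{1,2}$), the cross term $\int\nabla V:A$ vanishes in the limit, after a cutoff and capacity estimate, which is valid since $n\ge2$. The measure term decreases by $\Lambda|\{V=Ax\}|$. So $Ax$ minimizes $J_\Lambda$ exactly when $\int|\nabla V|^2\ge\Lambda|\{V=Ax\}|$ for all admissible $V$.

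By scaling $V_t(x)=tV(x/t)$, the Dirichlet energy scales like $t^n$, and so does the measure of the contact set. Normalizing to measure $1$ gives $\Lambda^{\ast\ast}=\Lambda^\ast$. Conversely, any local competitor $W$ on $B_R$ produces $V=Ax-W$, extended by $Ax$ outside $B_R$; this is not in $D^{1,2}$. The fix is to show that one can replace it by the harmonic-type decay outside $B_R$ at negligible cost as $R\to\infty$. I expect this decay and cutoff step to be the main obstacle: one needs the linear growth of $Ax$ to be compensated, which is precisely where $n\ge2$ (the Liouville/capacity property of $\R^n$) enters.

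\textbf{Strict inequality and rank one.} The strict bound $\Lambda^\ast>\Tr(A^TA)$ follows from a test/calibration argument. Writing $\Tr(A^TA)=|A|^2$, for $V$ with $|\{V=Ax\}|=1$ we have
\[
\int|\nabla V|^2\ge\int_{\{V=Ax\}}|\nabla V|^2=|A|^2 ,
\]
since $\nabla V=A$ a.e. on the contact set. Equality would force $\nabla V=0$ outside the contact set, contradicting $V\in D^{1,2}$ and $V=Ax\neq0$ on a set of positive measure. To upgrade this to a strict inequality for the infimum, I would use lower semicontinuity together with existence of a minimizer.

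For rank one, $A=a\otimes e_1$, and the problem reduces to the one-dimensional vector problem. The one-dimensional Bernoulli problem gives the threshold $|a|^2=\Tr(A^TA)$, obtained by a direct computation with competitors that vanish on an interval: the $1$D capacity cost does not decay. Slab competitors in $\R^d$ then show the bound is sharp.
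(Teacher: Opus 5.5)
The paper does not reprove this statement; it quotes it from \cite{SV_blowups}, so I assess your plan on its own merits. The reduction by $Q$ is sound. So is the cylinder/slicing proof of $\Lambda^{\ast\ast}(A)=\Lambda^{\ast\ast}(A_1)$; the slicing direction needs no cutoff, since each slice of $B_R$ is a ball of $\R^n$ on whose boundary the competitor equals $A_1x'$. The scaling step and the one-dimensional computation for rank one are also fine.

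However, you have the difficulty of the characterization step backwards. If $W$ is a competitor in $B_R$, then $W=Ax$ outside $B_R$, so $V:=Ax-W$ \emph{vanishes} outside $B_R$. It therefore lies in $H^1_0(B_R)\subset D^{1,2}$, the cross term $\int\nabla V:A$ is exactly zero by integration by parts, and $\{V=Ax\}\subset B_R$ up to a null set. Hence $J_\Lambda(W,B_R)-J_\Lambda(Ax,B_R)=\int|\nabla V|^2-\Lambda|\{V=Ax\}|\ge(\Lambda^\ast-\Lambda)|\{V=Ax\}|$, so $\Lambda\le\Lambda^\ast$ gives minimality with no decay argument at all. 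The cutoff is needed only in the opposite direction, when truncating a $V\in D^{1,2}$ with $\int|\nabla V|^2<\Lambda$ and $|\{V=Ax\}|=1$. There the cross term is again exactly zero, and what must vanish is $R^{-2}\int_{B_{2R}\setminus B_R}|V|^2$. For $n\ge3$ this follows from the $L^{2^\ast}$ bound. For $n=2$ use Poincar\'e on annuli together with the fact that the $L^{2,\infty}$ bound makes annular averages of $|V|$ of order $R^{-1}$.

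The genuine gap is the strict inequality $\Lambda^\ast(A_1)>\Tr(A_1^TA_1)$. Your argument at best shows that the value $|A_1|^2=\Tr(A_1^TA_1)$ is not \emph{attained}. To pass to the infimum you invoke existence of a minimizer, which you do not prove and which is exactly where the content lies. In dimension one the analogous infimum equals $|a|^2$ and is simply not attained, and nothing in your argument distinguishes that situation from $n\ge2$. Existence on $\R^n$ requires ruling out escape of contact mass to infinity along minimizing sequences. This is delicate: for $n=2$ the Dirichlet energy does not even control the $L^{2,\infty}$ quasinorm.

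The contradiction step is also not justified as written. Having $\nabla V=0$ a.e. on the complement of a merely measurable $K$ does not make $V$ constant there; the real obstruction is that $A_1\chi_K$ is not curl-free when ${\rm rk}(A_1)\ge2$. Both problems disappear with a quantitative bound valid for every admissible $V$, such as the one in Proposition~\ref{prop_LA}(i). With $S,U$ as in \eqref{def_SU} for $A_1$, one has $\dive(U^TV)=\Tr(S)$ a.e. on $K$ and $\|\dive(U^TV)\|_{L^2}\le\|\nabla V\|_{L^2}$. Hence $\Lambda^\ast(A_1)\ge(\sum_i\sigma_i)^2=\Tr(A_1^TA_1)+\sum_{i\ne j}\sigma_i\sigma_j$, where $\sigma_i$ are the singular values of $A_1$. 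Alternatively, via the Fourier transform, $\int_{K^c}|\nabla V|^2=\|\nabla V-A_1\chi_K\|_{L^2}^2\ge(|A_1|^2-\sigma_{\max}^2)|K|$, which is positive as soon as $n\ge2$.
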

We notice that our Theorem \ref{theo_La_old_paper} gave a negative answer to the question 
\begin{equation}\label{e:gap-question}
\text{\it Is it true that 
$\Lambda^{\ast\ast}(A)=\Tr(A^TA)$ even if ${\rm{\mathop {rank}}}(A)>1$?} 
\end{equation}
previously posed in \cite{MTV_reg_vect}, but didn't provide a closed analytic expression for $\Lambda^\ast(A)$.\medskip

Our main theorem from the present paper (Theorem \ref{theo_main_1}) refines  the classification of the blows-up limit at points $x_0 \in \mathrm{Sing}_2(\partial \Omega_{U})$ by providing explicitly the exact value of $\La^{\ast\ast}(A)$. Indeed, we have the following 
\begin{theorem}\label{main-cor}
Let $d \ge 2$, $k \ge d$,  $A \in \R^{k,d}$ be a $k \times d$ matrix with $1\le {\rm{rk}}(A)\le d$.  Then
\begin{equation}\label{eq_La_theo_main_cor}
\La^{\ast\ast}(A)=(\Tr(\sqrt{A^TA}))^2.
\end{equation}
In particular, for any $\Lambda>0$, we have the following dichotomy:
\begin{itemize}
\item if $\sqrt\Lambda\le \Tr(\sqrt{A^TA})$, then $Ax$ is a global minimizer of the vectorial functional $J_\Lambda$ in $\R^d$;
\item if $\sqrt\Lambda> \Tr(\sqrt{A^TA})$, then  $Ax$ is not a global minimizer of $J_\Lambda$.
\end{itemize}
\end{theorem}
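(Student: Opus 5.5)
The plan is to reduce Theorem \ref{main-cor} to Theorem \ref{theo_main_1} by way of Theorem \ref{theo_La_old_paper}, splitting according to the rank $n={\rm rk}(A)$. Once the formula \eqref{eq_La_theo_main_cor} is proved, the dichotomy follows directly from the discussion after \eqref{def_La}. There we showed that $Ax$ is a global minimizer of $J_\Lambda$ exactly when $\Lambda\le\Lambda^{\ast\ast}(A)$, and the condition $\Lambda\le(\Tr(\sqrt{A^TA}))^2$ is equivalent to $\sqrt\Lambda\le \Tr(\sqrt{A^TA})$.

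\emph{Case $n=1$.} By \eqref{e:old-teo-rank1}, $\Lambda^{\ast\ast}(A)=\Tr(A^TA)$. Since $A^TA$ is symmetric, positive semidefinite and of rank one, it has a single nonzero eigenvalue $\mu>0$, and therefore
\begin{equation}
\Tr(A^TA)=\mu=(\sqrt\mu)^2=(\Tr(\sqrt{A^TA}))^2 .
\end{equation}

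\emph{Case $n>1$.} Write $A=[A_1,0]Q$ as in Theorem \ref{theo_La_old_paper}, with $A_1\in\R^{k,n}$ of rank $n$, so that $\Lambda^{\ast\ast}(A)=\Lambda^\ast(A_1)$. The definition \eqref{prob_min} of $\Lambda^\ast(A_1)$ lives in dimension $n$ with $k\ge n$ and $n\ge 2$, and ${\rm rk}(A_1)=n$. Theorem \ref{theo_main_1} therefore applies with $d$ replaced by $n$ and gives $\Lambda^\ast(A_1)=(\Tr(\sqrt{A_1^TA_1}))^2$. It then remains to check that $\Tr(\sqrt{A^TA})=\Tr(\sqrt{A_1^TA_1})$. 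We have
\begin{equation}
A^TA=Q^T\begin{bmatrix} A_1^TA_1 & 0\\ 0 & 0\end{bmatrix}Q .
\end{equation}
Because $Q$ is orthogonal and the square root of a positive semidefinite matrix commutes with orthogonal conjugation and acts blockwise on block-diagonal matrices, it follows that $\sqrt{A^TA}=Q^T\,{\rm diag}(\sqrt{A_1^TA_1},0)\,Q$. Taking traces gives the identity, since the trace is invariant under conjugation.

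The argument contains no real obstacle, since all the analytic content is in Theorem \ref{theo_main_1}. The one point to handle carefully is the dimension shift in the case $n>1$: Theorem \ref{theo_main_1} must be applied to $A_1$ in dimension $n$, and the hypotheses $n\ge2$ and $k\ge n$ must be confirmed there. Both hold because $2\le n\le d\le k$.
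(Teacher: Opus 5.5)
Your proposal is correct and follows the paper's proof. You split by rank, use \eqref{e:old-teo-rank1} together with the single nonzero eigenvalue of $A^TA$ when the rank is one, and otherwise apply Theorem \ref{theo_La_old_paper} and then Theorem \ref{theo_main_1} to $A_1$ in dimension $n$, with the trace identity coming from orthogonal conjugation. The only cosmetic difference is that you absorb $n=d$ into the case $n>1$ (taking $A_1=AQ^T$), while the paper handles it separately by using $\Lambda^{\ast\ast}(A)=\Lambda^\ast(A)$ directly.
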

\begin{proof}
The proof is a combination of Theorem \ref{theo_main_1} and Theorem \ref{theo_La_old_paper}. We start setting $n:=\text{rk}(A)$. We consider two cases.\medskip

\noindent{Case 1: $n=d$.} By Theorem \ref{theo_La_old_paper}, we have that 
$$\Lambda^{\ast\ast}(A)=\Lambda^\ast(A),$$
while Theorem \ref{theo_main_1} implies that 
$$\Lambda^\ast(A)=(\Tr(\sqrt{A^TA}))^2.$$
By combining these identities, we get \eqref{eq_La_theo_main_cor}.
\medskip

\noindent{Case 2: $n=1$.} Since the rank of $A$ is the same as the rank of the symmetric matrix $A^TA$, we get that $A^TA$ has a unique positive eigenvalue $\lambda>0$. Thanks to \eqref{e:old-teo-rank1}, we have that 
$$\Lambda^{\ast\ast}(A)=\Tr(A^TA)=\lambda.$$
On the other hand, the symmetric $d\times d$ matrix $\sqrt{A^TA}$ has a unique eigenvalue $\sqrt{\lambda}$ and so, we have $(\Tr(\sqrt{A^TA}))^2=(\sqrt{\lambda})^2=\lambda$, which concludes the proof of \eqref{eq_La_theo_main_cor} in this case.\medskip

\noindent{Case 3: $1<n<d$.} There exist an orthogonal $d\times d$ matrix $Q$ and a $k\times n$ matrix $A_1 $ of rank $n$ such that 
$A=\begin{bmatrix} A_1, 0\end{bmatrix}Q$, where with $[A_1,0]$ we denote the $k\times d$ matrix for which the first $n$ columns are the columns of $A_1$ and the last $d-n$ columns are $0$. Thanks to Theorem \ref{theo_La_old_paper}, we have that 
$$\Lambda^{\ast\ast}(A)=\Lambda^{\ast\ast}(A_1)=\Lambda^{\ast}(A_1).$$
Since $A_1$ is a $k\times n$ matrix of rank $n$, we can apply Theorem \ref{theo_main_1}, obtaining 
$$\Lambda^{\ast}(A_1)=\left(\Tr\left(\sqrt{A_1^TA_1}\right)\right)^2.$$
Now, to conclude, we observe that 
\begin{equation*}
A^TA=Q^T[A_1,0]^T[A_1,0]Q=Q^T\begin{pmatrix}
A_1^TA_1&0\\
0&0
\end{pmatrix}Q,
\end{equation*}
and so
\begin{equation*}
(A^TA)^{\sfrac12}=Q^T\begin{pmatrix}
(A_1^TA_1)^{\sfrac12}&0\\
0&0
\end{pmatrix}Q,
\end{equation*}
which in particular implies that
$$\text{Tr}\left((A^TA)^{\sfrac12}\right)=\text{Tr}\begin{pmatrix}
(A_1^TA_1)^{\sfrac12}&0\\
0&0
\end{pmatrix}=\text{Tr}\left((A_1^TA_1)^{\sfrac12}\right),$$
which concludes the proof.
\end{proof}


\begin{remark}
Theorem \ref{main-cor} allows to compute exactly the gap between $\Tr(A^TA)$  and $\Lambda^{\ast\ast}(A)$. Indeed, let $A$ be a $k\times d$ matrix, let $n$ be the rank of $A$ and let $1<n\le d$.  Denoting with $\sigma_i^2$, $i=1,\dots,n$, the eigenvalues of $A^TA$ (counted with their multiplicity), we have that 
\begin{equation}
\Tr(A^TA)=\sum_{i=1}^n \sigma_i^2,
\end{equation}
while by Theorem \ref{main-cor}
\begin{equation}
\Lambda^{\ast\ast}(A)=\left(\sum_{i=1}^n \sigma_i\right)^2.
\end{equation}
Thus 
\begin{equation}
 \Lambda^{\ast\ast}(A)-\Tr(A^TA)=\sum_{i,j=1,i\neq j}^n\sigma_i\sigma_j>0,
\end{equation}
which gives a quantitative answer to the question \eqref{e:gap-question}.
\end{remark}

\begin{theorem}\label{main-cor-blow-up}
Let $D$ be an open set in $\R^d$, let $\Lambda>0$ be a real constant, and let $U\in H^1(D;\R^k)$  be a minimizer of the vectorial function $J_\Lambda$ in $D$. Then, for every $x_0\in Sing_2(\partial\Omega_U)\cap D$ and every blow-up $U_0$ of $U$ at $x_0$ we have the inequality
\begin{equation}\label{e:blow-up-lower-bound}\text{\rm Tr}\left(\sqrt{(DU_0)^T(DU_0)}\right)\ge \sqrt{\Lambda},
\end{equation}
and
\begin{equation}\label{e:blow-up-sequence-independence}
\text{\rm Tr}\left(\sqrt{(DU_0)^T(DU_0)}\right)=\lim_{r\to0^+}\text{\rm Tr}\left(\sqrt{(DU)_{x_0,r}^T(DU)_{x_0,r}}\right),
\end{equation}
where $(DU)_{x_0,r}$ is the $k\times d$ matrix
$$(DU)_{x_0,r}:=\frac{1}{|B_r|}\int_{B_r(x_0)}DU(x)\,dx,$$ 
$DU$ being the $k\times d$ matrix for which the $j$th row is the gradient of the $j$th component $u_j$ of $U$.
\end{theorem}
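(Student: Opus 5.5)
Both claims follow from Theorem \ref{main-cor} once we know that blow-ups at points of $\mathrm{Sing}_2(\partial\Omega_U)$ are linear global minimizers. For the lower bound \eqref{e:blow-up-lower-bound}, take $x_0\in \mathrm{Sing}_2(\partial\Omega_U)\cap D$ and $U_0\in\mc{BU}_U(x_0)$. By \cite[Section 2]{MTV_reg_vect}, recalled above, $U_0(x)=Ax$ is linear, with $A=DU_0$, and it is a global minimizer of $J_\Lambda$ in $\R^d$. The matrix $A$ is nonzero: $x_0$ is a free boundary point of density one, and minimizers enjoy a nondegeneracy estimate $\|U\|_{L^\infty(B_r(x_0))}\ge c r$, which passes to the limit under uniform convergence. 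By the definition \eqref{def_La}, $\Lambda\le\Lambda^{\ast\ast}(A)$, and Theorem \ref{main-cor} gives $\Lambda^{\ast\ast}(A)=(\Tr(\sqrt{A^TA}))^2$. Taking square roots yields \eqref{e:blow-up-lower-bound}.

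For \eqref{e:blow-up-sequence-independence}, fix a sequence $r_n\to0$ with $U_{r_n}(x):=r_n^{-1}U(x_0+r_nx)\to U_0$. I will first show $DU_{r_n}\to DU_0$ strongly in $L^2(B_1)$. This is the standard compactness of blow-ups of vectorial Bernoulli minimizers: $U$ is Lipschitz, so $U_{r_n}$ is bounded in $W^{1,\infty}$, and strong $H^1_{loc}$ convergence follows from minimality, for instance through the argument of \cite{MTV_reg_vect}. Then
\[
(DU)_{x_0,r_n}=\frac{1}{|B_1|}\int_{B_1}DU_{r_n}\,dx\to\frac{1}{|B_1|}\int_{B_1}DU_0\,dx=A .
\]
Weak $L^2$ convergence, which is immediate from the uniform Lipschitz bound and uniform convergence, already suffices for this. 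Since $M\mapsto\Tr(\sqrt{M^TM})$ (the nuclear norm) is continuous, the quantity $\Tr(\sqrt{(DU)^T_{x_0,r_n}(DU)_{x_0,r_n}})$ converges to $\Tr(\sqrt{A^TA})$ along every blow-up sequence.

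The main obstacle is upgrading this subsequence statement to a limit as $r\to0^+$, since a priori different blow-up sequences may produce different matrices $A$. I plan to show that $\Tr(\sqrt{A^TA})$ takes the same value for every $U_0\in\mc{BU}_U(x_0)$. The tool is the Weiss boundary-adjusted energy
\[
W(U,x_0,r)=\frac1{r^d}\int_{B_r(x_0)}\big(|\nabla U|^2+\Lambda\chi_{\Omega_U}\big)-\frac1{r^{d+1}}\int_{\partial B_r(x_0)}|U|^2,
\]
which is monotone in $r$ and therefore has a limit. Its value on any blow-up is $W(U_0,0,1)$. For a linear map $Ax$ with $|\Omega_{Ax}\cap B_1|=|B_1|$ this equals $|B_1|\Tr(A^TA)+\Lambda|B_1|-\int_{\partial B_1}|Ax|^2=\Lambda|B_1|$, because $\int_{\partial B_1}|Ax|^2=|B_1|\Tr(A^TA)$. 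So the Weiss energy does not see $A$, and I do not expect it to settle uniqueness.

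My fallback is to use connectedness of the set of blow-ups. The map $r\mapsto (DU)_{x_0,r}$ is continuous and bounded, so its set of accumulation points as $r\to0$ is connected. Every accumulation point is the gradient of some blow-up, hence lies in the set $\mathcal A$ of linear global minimizers with $|\Omega_{Ax}|$ of full density. The composite map is continuous, so its set of limit values $\{\Tr(\sqrt{A^TA})\}$ is a connected interval. To conclude I would still have to pin down a single value of $\Tr(\sqrt{A^TA})$ on this set. The natural candidate is the equality $\Tr(\sqrt{A^TA})=\sqrt\Lambda$, which would follow from a first-order (domain-variation) optimality condition at $\mathrm{Sing}_2$ points. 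This step is the one I am least sure of.
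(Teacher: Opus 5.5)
Your proof of the lower bound \eqref{e:blow-up-lower-bound} is correct and is the paper's argument: the blow-up is a linear global minimizer by \cite{MTV_reg_vect}, so $\Lambda\le\La^{\ast\ast}(DU_0)$, and you then apply Theorem \ref{main-cor}. Your observation that $(DU)_{x_0,r_n}\to DU_0$ along any blow-up sequence also matches the paper.

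The gap is in \eqref{e:blow-up-sequence-independence}. You never show that $\Tr(\sqrt{(DU_0)^TDU_0})$ is the same for all $U_0\in\mc{BU}_U(x_0)$, and the fallback you propose cannot work, because the equality $\Tr(\sqrt{A^TA})=\sqrt\Lambda$ at $\mathrm{Sing}_2$ points is false. Take $k\ge d$, $A$ of rank $d$, and $0<\sqrt\Lambda<\Tr(\sqrt{A^TA})$. By Theorem \ref{main-cor}, $U(x)=Ax$ minimizes $J_\Lambda$ in every ball. Since $\Omega_U=\R^d\setminus\{0\}$, we have $0\in\mathrm{Sing}_2(\partial\Omega_U)$. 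The only blow-up of $U$ at $0$ is $U$ itself, and for it the inequality is strict. This is also why the paper stresses that the condition on $\mathrm{Sing}_2$ does not come from inner variations: the zero set has density zero there. Your connectedness argument only shows that the set of limit values of $\Tr(\sqrt{(DU)_{x_0,r}^T(DU)_{x_0,r}})$ is an interval, not a single point.

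The missing ingredient is a uniqueness statement at the level of $\La^{\ast\ast}$. The paper invokes \cite[Theorem 1.5]{SV_blowups}, which says that $\La^{\ast\ast}(DU_0)$ depends only on $x_0$ and not on the blow-up sequence. Combined with Theorem \ref{main-cor}, $\Tr(\sqrt{(DU_0)^TDU_0})=\sqrt{\La^{\ast\ast}(DU_0)}$ is then one number $\tau(x_0)$ for all blow-ups at $x_0$, even if the matrices $DU_0$ differ.

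With that in hand, your compactness argument finishes the proof. For any $r_n\to0$, the Lipschitz bound and Arzel\`a--Ascoli give a subsequence along which $U_{x_0,r_n}$ converges to some blow-up. Along that subsequence the traces converge to $\tau(x_0)$, so the full limit as $r\to0^+$ exists and equals $\tau(x_0)$. Without \cite[Theorem 1.5]{SV_blowups} or an equivalent replacement, the second claim is not proved; as you correctly note, the Weiss energy does not see $A$.
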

\begin{proof}
Let $U_{x_0,r_n}(x)=r_n^{-1}U(x_0+r_nx)$ be a sequence converging to a blow-up limit $U_0:\R^d\to\R^k$ uniformly on every compact subset of $\R^d$. We know that (see for instance \cite{MTV_reg_vect}) that $U_{x_0,r_n}$ converges to $U_0$ strongly in $H^1(B_1,\R^k)$ and that $U_0$ is a linear global minimizer of the vectorial problem. In particular, this implies that $(DU)_{x_0,r_n}\to DU_0$ as $n\to+\infty$ and so 
$$\text{\rm Tr}\left(\sqrt{(DU_0)^T(DU_0)}\right)=\lim_{n\to+\infty}\text{\rm Tr}\left(\sqrt{(DU)_{x_0,r_n}^T(DU)_{x_0,r_n}}\right).$$
Now, by Theorem \ref{main-cor} and the definition \eqref{def_La} of $\Lambda^{\ast\ast}$ we have that 
$$\Lambda\le \Lambda^{\ast\ast}(DU_0)=\left(\text{\rm Tr}\left(\sqrt{(DU_0)^T(DU_0)}\right)\right)^2,$$
which proves \eqref{e:blow-up-lower-bound}. 
By \cite[Theorem 1.5]{SV_blowups} we know that $\Lambda^{\ast\ast}$ depends only on the point $x_0$ and not on the blow-up sequence, which proves \eqref{e:blow-up-sequence-independence}.
\end{proof}

\begin{remark}
Theorem \ref{main-cor-blow-up} implies that if $U:D\to\R^k$ is a minimizer of the vectorial functional $J_\Lambda$ from \eqref{e:vectorial-functional-in-D} in the open set $D\subset\R^d$, then $U$ solves the overdetermined system 
\begin{equation}\label{e:vectorial-system}\begin{cases}
\Delta U=0.&\text{in}\quad \Omega_U:=\{U\neq 0\}\cap D, \smallskip\\
|\nabla U|=\sqrt{\Lambda}.&\text{on}\quad \text{\rm Reg}(\partial\Omega_U),\smallskip\\
\text{\rm Tr}\Big(\sqrt{(DU)^T(DU)}\Big)\ge\sqrt{\Lambda}.&\text{on}\quad \text{\rm Sing}_2(\partial\Omega_U),
\end{cases}
\end{equation}
where the third inequality holds in the sense explained in Theorem \ref{main-cor-blow-up} above, precisely: 
$$\lim_{r\to0^+}\text{\rm Tr}\left(\sqrt{(DU)_{x_0,r}^T(DU)_{x_0,r}}\right)\ge \sqrt{\Lambda}\quad\text{for all}\quad x_0\in \text{\rm Sing}_2(\partial\Omega_U).$$
We notice that in the scalar case $k=1$, the minima of $J_\Lambda$ are precisely the minima of the classical Alt-Caffarelli-Friedman two-phase functional and that in this case the above system reads as 
\begin{equation}\label{e:two-phase-system}\begin{cases}
\Delta u=0,&\text{in}\quad \Omega_u:=\{u\neq 0\}\cap D,\smallskip\\
|\nabla u|=\sqrt{\Lambda},&\text{on}\quad \text{\rm Reg}(\partial\Omega_u),\smallskip\\
|\nabla u|\ge \sqrt{\Lambda,}&\text{on}\quad \text{\rm Sing}_2(\partial\Omega_u),
\end{cases}
\end{equation}
which is precisely the classical two-phase Bernoulli free boundary problem. We notice that the free boundary conditions in \eqref{e:two-phase-system} can be deduced from the inner variations of the functional $J_\Lambda$ (see for instance \cite{MTV_reg_vect}, where this argument was used at the points of the highest stratum of $\text{\rm Sing}_2(\partial\Omega_U)$). On the other hand, the free boundary condition on $\text{\rm Sing}_2(\partial\Omega_U)$ in \eqref{e:vectorial-system} is a consequence of the analysis of the global minimizes of the variational problem \eqref{prob_min}. This boundary condition reflects the vectorial nature of the functional and is not the consequence of an inner variation argument, but from deformations which correspond to enlarging the zero set $\{U=0\}$ from a $j$-dimensional surface $\Gamma$ to an ellipsoidal tubolar neighborhood of this surface.
\end{remark}

\subsection{Connection with the global obstacle problem}

It was proved in \cite{DF_bubble,D_att,FS_quad,L_inv,S_dim2}, that any solution to the global  obstacle problem
\begin{equation}\label{prob_obs}
\Delta u=\chi_{\{u>0\}} \quad \text{ in } \R^d,
\end{equation}
with $\{u=0\}$ bounded and with nonempty interior, must have as coincidence set $\{u=0\}$ an ellipsoid. For a short proof in dimension $d \ge3$, which also inspired the Hopf-type argument we used in the proof of Theorem \ref{theo_main_2}, see  \cite{EW_short_proof}.
For a more general classification result, that deals also with the case of unbounded contact sets, we refer to \cite{EFW_glob_obs,S_dim2,SW_glob_dim2}.

To understand the connection between \eqref{prob_min} and \eqref{prob_obs}, we give a sketched argument omitting several details.
%
Letting $B$ be as in \eqref{def_BRD}, let us consider 
\begin{equation}\label{def_psi_la}
\psi_\la(x):=\la -x\cdot Bx
\end{equation}
and the obstacle problem 
\begin{equation}
\inf\left\{\int_{\R^d}|\nabla w|^2 dx: w \in D^{1,2}(\R^d,\R), w \ge \psi_\la\right\}.
\end{equation}
It is easy to see that there exists a unique minimizer $v_\la$. Let $E_\la:=\{v_\la=\psi_\la\}$ be its coincidence set. Minimality yields that  $v_\la$ satisfies
\begin{equation}
-\Delta v_\la=\chi_{E_\la},
\end{equation}
thus $u_\la:=v_\la-\psi_\la$  is a solution of \eqref{prob_obs}.
In view of \eqref{eq_NE},  we have 
\begin{equation}
N_{E_\la}=v_\la \quad \text{ and } \quad \nabla N_{E_\la} (x)=-Bx \text{ on } E_\la.
\end{equation}
Since $u_\la \ge 0$ by minimality, it is easy to see that $E_\la$ must be bounded and that $|E_\la|>0$. Solution to \eqref{prob_obs} are known to be convex (see \cite[Theorem 5.1]{PSU_conv}) so that also $E_\la$ is convex and in particular its interior is not empty.  Then, by \cite{EW_short_proof, S_dim2}, $E_\la$ must be an ellipsoid. A scaling argument over $\la$ also gives, for a suitable value  $\la^*$, that $|E_{\la^*}|=1$.

Hence, we may, roughly speaking, see minimizers of \eqref{prob_min} as the gradient of minimizers of the obstacle problem \eqref{prob_obs} after subtracting $\psi_{\la^*}$. Of course, one still need to multiply by $-(\Tr(\sqrt{A^TA}))U$ to recover a minimizer of  \eqref{prob_min}.
Equivalently, we are selecting a minimizer of \eqref{prob_obs} whose coincidence set is determined by  $B$, with $B$ as in \eqref{def_BRD}.

Furthermore, these sketched ideas, once fully detailed,  are an alternative approach to Lemma \ref{lemma_elipsoid_dge3} and  Lemma \ref{lemma_elipsoid_d2}. 
However, to our understanding, with this alternative approach the ellipsoidal coincidence set is not determined explicitly.
Hence, we rather use a more direct argument which is based on a explicit formula for the solution of \eqref{eq_NE} when $E$ is an ellipsoid, see \cite{CMMRSV_d2,DF_dge3}.

\subsection*{Organization of the paper} 
In Section \ref{subsec_ell} we construct a family of  competitors $N_E$, of the form \eqref{def_NE}, with prescribed gradients on the corresponding ellipsoids $E\subset\R^d$. 
In Section \ref{subsec_La} we prove Theorem \ref{theo_main_1} and we show that there exists a minimizer of \eqref{prob_min} whose  contact set is the ellipsoid $E_A$ defined in \eqref{def_EA}. In Section \ref{sec_rig} we show that the minimizer of \eqref{prob_min} is unique.

\subsection*{AI statement}
The computation of the gradient of $N_E$ on the ellipsoid $E$ in dimension $d\ge 3$ (Lemma \ref{lemma_elipsoid_dge3}) is the result of a chat with ChatGPT 5.6 Sol with minimal modifications. All the other results and proofs in the paper were conceived and written by the authors.

\section{Construction of the competitors $N_{E_A}$}\label{subsec_ell}

In this section we build an explicit family of competitors $N_E$ of the form \eqref{def_NE} associated to ellipsoids $E\subset\R^d$. We compute explicitly the gradient of $N_E$ on the ellipsoid $E$ and we show that 
\begin{equation*}
\nabla N_{E}(x)=-Bx \quad \text{ for all }\quad x\in E.
\end{equation*}
In particular, thanks to the explicit formulas for $N_E$ provided by  \cite{CMMRSV_d2,DF_dge3}, we are able to trace the precise relation between the matrix $B$ and the ellipsoid $E$. This   allows to invert the procedure and to find the ellipsoid $E$ for every prescribed  matrix $B$ of the form \eqref{def_BRD}.
This will be the key property to obtain an explicit minimizer.  We distinguish the cases $d\ge3$ and $d=2$ since the case $d=2$ is far simpler and we begin with an elementary lemma.

\begin{lemma}\label{lemma_homo_gaus}
Let $f:\R^d\to\R$ be a $0$-homogeneous function measurable and integrable in $\mb{S}^{d-1}$. Then 
\begin{equation}\label{eq_homo_gaus}
\fint_{\mb{S}^{d-1}}f\, d \mc{H}^{d-1}=\pi^{-d/2}\int_{\R^d} f(x) e^{-|x|^2} \, dx.
\end{equation}
\end{lemma}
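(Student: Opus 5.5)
Since $f$ is $0$-homogeneous, we have $f(x)=f(x/|x|)$ for every $x\neq0$. The plan is therefore to integrate the right-hand side of \eqref{eq_homo_gaus} in polar coordinates: the Gaussian weight will separate from the angular integral of $f$. Writing $x=r\theta$ with $r>0$ and $\theta\in\mb{S}^{d-1}$, we have $dx=r^{d-1}\,dr\,d\mc{H}^{d-1}(\theta)$, and Fubini--Tonelli (applied first to $|f|$, which is legitimate because $f$ is integrable on the sphere and the radial factor is integrable) gives
\begin{equation*}
\int_{\R^d} f(x)e^{-|x|^2}\,dx=\left(\int_{\mb{S}^{d-1}} f\,d\mc{H}^{d-1}\right)\left(\int_0^{+\infty} r^{d-1}e^{-r^2}\,dr\right).
\end{equation*}
This also shows that the left-hand side is absolutely convergent.

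To find the radial constant without computing it explicitly, I would apply the same identity to $f\equiv1$. Since $\int_{\R^d}e^{-|x|^2}\,dx=\pi^{d/2}$ (the product of $d$ one-dimensional Gaussian integrals), this yields
\begin{equation*}
\int_0^{+\infty} r^{d-1}e^{-r^2}\,dr=\frac{\pi^{d/2}}{\mc{H}^{d-1}(\mb{S}^{d-1})}.
\end{equation*}
Substituting this back into the first identity and dividing by $\mc{H}^{d-1}(\mb{S}^{d-1})$ gives exactly \eqref{eq_homo_gaus}.

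There is no real obstacle here. The only point needing care is the measure-theoretic justification: $f$ is defined through its restriction to the sphere, so the polar-coordinates formula must be applied to $|f|$ first to obtain integrability, and only then to $f$ itself.
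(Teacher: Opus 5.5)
Your proof is correct and follows essentially the same route as the paper: integrate in polar coordinates so the Gaussian radial factor separates, then identify $\int_0^{+\infty} r^{d-1}e^{-r^2}\,dr$ through $\int_{\R^d}e^{-|x|^2}\,dx=\pi^{d/2}$. The paper uses this normalization implicitly in one line, while you spell it out via the case $f\equiv1$ and add the Tonelli justification with $|f|$.
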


\begin{proof}
The proof is a direct computation in polar coordinates,
\begin{align*}
\int_{\R^d} f(x) e^{-|x|^2} \, dx &=\int_{\mb{S}^{d-1}}f\, d \mc{H}^{d-1} \int_0^{+\infty} r^{d-1}e^{-r^2}  \, dr\\
&=\fint_{\mb{S}^{d-1}}f\, d \mc{H}^{d-1} \int_{\R^d} e^{-|x|^2}  \, dx=\pi^{d/2}\fint_{\mb{S}^{d-1}}f\, d \mc{H}^{d-1}.\qedhere
\end{align*}
\end{proof}

\begin{lemma}\label{lemma_elipsoid_dge3}
Let $d\ge 3$, $k\ge d$, and let $A$ be a $k\times d$ matrix of rank $d$. Let $S:=(A^TA)^{1/2}$ and $B:=(\text{\rm Tr}(S))^{-1}S$.
Then, there exists a  unique ellipsoid $E_A$, centered in $0$ and
with volume $|E_A|=1$, such that
\begin{equation}\label{eq_nabla_NEa_dge3}
\nabla N_{E_A}(x)=-Bx \quad \text{ in } E_A,
\end{equation}
where $N_{E_A}$ is given by \eqref{def_NE}.
More precisely, the ellipsoid $E_A$ is constructed as follows:
\begin{enumerate}[\quad\rm(1)]
\item We first write the matrix $B$ in the form $B=RDR^T$ (as in \eqref{def_BRD}), where $R$ is an orthogonal $d\times d$ matrix and $D$ is a diagonal matrix with entries $\{d_i\}_{i=1,\dots,d}$.\smallskip
\item We then define the vector $a^*=(a^\ast_1,\dots,a^\ast_d) \in (0,+\infty)^d$ as  
\begin{equation}\label{eq_a_star}
a^*_i:=e^{-p_i^*/2} \quad \text{ for }  i=1, \dots, d,
\end{equation}
where $p^*=(p_1^\ast,\dots,p_d^\ast)\in\R^d$ is the (unique) minimum of the map 
\begin{equation}\label{def_PhiD}
\Phi_D:\R^d\to\R\ ,\qquad \Phi_D(p):=\fint_{\mb{S}^{d-1}}\log\left(\sum_{j=1}^de^{p_j}\theta_j^2\right) \, d \mc{H}^{d-1}(\theta)-\sum_{j=1}^d d_j p_j
\end{equation}
on the hyperplane 
\begin{equation}\label{def_H}
H:=\left\{p=(p_1,\dots,p_d) \in \R^d: \sum_{j=1}^dp_j=0\right\}.
\end{equation}
\item Finally, the ellipsoid $E_A$ is defined as 
\begin{equation}\label{def_EA_dge3}
E_A=R E(\omega_d^{-1/d}a^*),
\end{equation}
where $R$ is the orthogonal matrix from (1) and $E(\omega_d^{-1/d}a^*)$ is the ellipsoid associated to the vector $\omega_d^{-1/d}a^*$ as in \eqref{def_elips}.
\end{enumerate}
\end{lemma}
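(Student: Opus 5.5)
The plan is to use the classical formula for the Newtonian potential of a homogeneous ellipsoid (as in \cite{DF_dge3}). Then we identify the condition $\nabla N_E(x)=-Bx$ with the Euler--Lagrange equation of $\Phi_D$ on $H$.

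\textbf{Step 1: rotation invariance.} Since $\Gamma$ is radial, $N_{RE}(Rx)=N_E(x)$ for any orthogonal $R$, so
$$\nabla N_{RE}(Rx)=R\nabla N_E(x).$$
Hence it suffices to treat $R=\mathrm{Id}$, $B=D$ diagonal, and axis-aligned ellipsoids $E(c)$. In that case the target condition reads
$$\nabla N_{E(c)}(x)=-Dx\quad\text{in } E(c).$$

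\textbf{Step 2: the potential inside $E(c)$.} Inside the ellipsoid, $N_{E(c)}$ is a quadratic polynomial:
$$N_{E(c)}(x)=\mathrm{const}-\frac12\sum_i\mu_i(c)x_i^2,\qquad \mu_i(c)=\frac{c_1\cdots c_d}{2}\int_0^\infty\frac{ds}{(c_i^2+s)\prod_j (c_j^2+s)^{1/2}}.$$
These coefficients satisfy $\sum_i\mu_i=1$, which is forced by $-\Delta N_E=1$ in $E$. To connect with $\Phi_D$, I would rewrite $\mu_i$ as a spherical average:
$$\mu_i(c)=\fint_{\mb S^{d-1}}\frac{c_i^{-2}\theta_i^2}{\sum_j c_j^{-2}\theta_j^2}\,d\mc H^{d-1}(\theta).$$
To prove this identity, I would use Lemma \ref{lemma_homo_gaus}: pass the spherical average to a Gaussian integral, then apply the identity
$$\frac1{q}=\int_0^\infty e^{-tq}\,dt,$$
and compute the resulting Gaussian integrals explicitly. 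After a change of variables this reproduces the $s$-integral above. I expect this computation to be the main technical obstacle. Alternatively, one can verify the spherical formula directly by differentiating $\Gamma\ast\chi_E$ and using that the averaged integrand is homogeneous of degree $0$.

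\textbf{Step 3: matching with $\Phi_D$.} Set $e^{p_j}=c_j^{-2}$, i.e. $c_j=e^{-p_j/2}$. Then
$$\partial_{p_i}\Phi_D(p)=\mu_i(c)-d_i.$$
Since $\sum_i d_i=\Tr B=1=\sum_i\mu_i$, the gradient of $\Phi_D$ is tangent to $H$. Therefore critical points of $\Phi_D|_H$ are exactly the solutions of $\mu_i=d_i$ for all $i$. The restriction to $H$ fixes $\prod_j c_j=1$, so $|E(a)|=\omega_d$. The condition $\mu=d$ is invariant under scaling $c\mapsto tc$, since $\mu$ is $0$-homogeneous in $c$. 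Scaling by $\omega_d^{-1/d}$ then gives volume $1$, and $\nabla N_E=-Dx$ still holds because $\mu$ is unchanged.

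\textbf{Step 4: existence and uniqueness of the minimizer.} Strict convexity of $\Phi_D$ on $H$ follows from Hölder's inequality: $p\mapsto\log\sum_j e^{p_j}\theta_j^2$ is convex. It is strictly convex along a direction $v$ unless $v$ is constant on the support of $\theta$, and averaging over the sphere forces $v\parallel(1,\dots,1)$, which is excluded on $H$.

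Coercivity on $H$ comes from $d_j>0$ and $\sum_j d_j=1$. As $|p|\to\infty$ in $H$, we have $\max_j p_j\to+\infty$. The spherical term is bounded below by $\max_j p_j$ plus the integrable quantity $\fint\log\theta_j^2$, and this dominates the linear term $\sum_j d_jp_j$, since that term is $<\max_j p_j$ strictly and with linear gap on $H$. A short compactness argument on the unit sphere of $H$ makes this rigorous.

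\textbf{Step 5: uniqueness of the ellipsoid.} Any volume-one centered ellipsoid satisfying $\nabla N_E=-Bx$ has a Hessian of $N_E$ with the same eigenvectors as $B$. After rotation and rescaling to $\prod_j c_j=1$, it therefore yields a critical point of $\Phi_D|_H$, which is unique by Step 4. If some eigenvalues $d_i$ coincide, the axis assignment is ambiguous, but the resulting ellipsoid is the same. This gives uniqueness of $E_A$.
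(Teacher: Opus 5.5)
Your proposal is correct and follows the paper's proof essentially step by step: the Di Fratta formula for the interior potential, the spherical-average representation of the coefficients via Lemma \ref{lemma_homo_gaus} and $1/q=\int_0^\infty e^{-tq}\,dt$, the substitution $c_j=e^{-p_j/2}$ identifying $\nabla\Phi_D$ with $\mu-d$ (which lies in $H$, so the Lagrange multiplier vanishes), strict convexity plus coercivity on $H$, rescaling to volume one by $0$-homogeneity, and reduction to the diagonal case by rotation. The differences are minor: you obtain $\sum_i\mu_i=1$ from $-\Delta N_E=1$ in $E$ rather than from the telescoping integral, and you get convexity from log-sum-exp rather than the explicit Hessian; in addition, your Step 5 handles ellipsoids of arbitrary orientation, which the paper's proof leaves implicit, and your claim there about coinciding $d_i$ is justified because uniqueness of $p^*$ together with the permutation symmetry of $\Phi_D$ forces $p^*_i=p^*_j$ whenever $d_i=d_j$.
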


\begin{proof}
First suppose that $B$ is a diagonal matrix, that is $B=D$, with $B,D$ is as in \eqref{def_BRD}. By \cite[Theorem 2.1]{DF_dge3}  for any ellipsoid  $E(a)$ with $a \in (0,+\infty)^d$, denoting with
\begin{equation}
P(a):=\prod_{j=1}^da_j \quad \text{ and } \quad D_a(t):=\prod_{j=1}^d\sqrt{a_j^2+t} \quad \text{ for any } t\in [0,+\infty),
\end{equation}
we have
\begin{equation}\label{proof_lemma_elipsoid_dge3_1}
N_{E(a)}(x)=\frac{P(a)}{4}\int_0^{+\infty}\frac{1}{D_a(t)}\left(1-\sum_{i=1}^d\frac{x_i^2}{a_i^2+t}\right) \, dt
\quad \text{ in } E(a).
\end{equation}
Then, denoting with 
\begin{equation}\label{proof_lemma_elipsoid_dge3_2}
L_i(a):=\frac{P(a)}{2}\int_0^{+\infty}\frac{1}{D_a(t)(a_i^2+t)}\, dt \quad  \text{ for } i=1, \dots, d,
\end{equation}
it follows that
\begin{equation}
\nabla N_{E(a)}(x)=-(L_1(a)x_1, \dots,L_d(a)x_d) \quad \text{ in } E(a).
\end{equation}
We observe some elementary properties of $L_i(a)$. First of all we notice that for any $r>0$ 
\begin{equation}\label{proof_lemma_elipsoid_dge3_3}
L_i(ra)=L_i(a) \quad  \text{ for } i=1, \dots, d.
\end{equation}
Indeed, the change of variables $r^{-2}t=s$ yields
\begin{equation}
L_i(ra)=\frac{r^dP(a)}{2}\int_0^{+\infty}\frac{1}{(r^2a_i^2+t)\prod_{j=1}^d\sqrt{r^2a_j^2+t}}\, dt=L_i(a).
\end{equation}
Furthermore,
\begin{equation}\label{proof_lemma_elipsoid_dge3_4}
\sum_{i=1}^dL_i(a)=1.
\end{equation}
Indeed, 
\begin{equation}
\sum_{i=1}^dL_i(a)=\frac{P(a)}{2}\int_0^{+\infty}\frac{1}{D_a(t)}\sum_{i=1}^d\frac{1}{(a_i^2+t)}\, dt\\
=-P(a)\int_0^{+\infty}\frac{d}{dt}\frac{1}{D_a(t)}\, dt=\frac{P(a)}{D_a(0)}=1.
\end{equation}
The validity of  \eqref{eq_nabla_NEa_dge3} in $E(a)$ is equivalent to $L_i(a)=d_i$ for $i=1, \dots, d$, where $d_i$ are the diagonal entries of the matrix $D$ defined in is  \eqref{def_BRD}.
We claim that  for any $d \in (0,+\infty)^d$ with $\sum_{i=1}^d d_i=1$, there exists a unique $a^* \in (0,+\infty)^d$ such that $L_i(a^*)=d_i$ for $i=1, \dots, d$, $P(a^*)=1$ and that $a^*$ is actually  given by \eqref{eq_a_star}. In view of \eqref{proof_lemma_elipsoid_dge3_4}, such a claim at least makes sense.

To prove this claim, we first set $b_i:=a_i^{-2}$ and rewrite \eqref{proof_lemma_elipsoid_dge3_2} as
\begin{equation}\label{proof_lemma_elipsoid_dge3_5}
L_i(a):=\frac{b_i}{2}\int_0^{+\infty}\frac{1}{(1+b_it)\prod_{j=1}^d\sqrt{1+b_jt}}\, dt \quad  \text{ for } i=1, \dots, d.
\end{equation}
We claim that 
\begin{equation}\label{proof_lemma_elipsoid_dge3_6}
L_i(a)=\fint_{\mb{S}^{d-1}}\frac{b_i \theta_i^2}{\sum_{j=1}^db_j\theta_j^2}\, d \mc{H}^{d-1} \quad  \text{ for } i=1, \dots, d.
\end{equation}
Applying \eqref{eq_homo_gaus} to the function 
$$f(x)=f(x_1,\dots,x_d):=\frac{b_i x_i^2}{\sum_{j=1}^db_jx_j^2},$$   
and using the identity
$$\frac1q=\int_0^{+\infty}e^{-tq}\, dt\quad\text{with}\quad  q=\sum_{j=1}^db_jx_j^2,$$ 
we get
\begin{align*}
\fint_{\mb{S}^{d-1}}\frac{b_i \theta_i^2}{\sum_{j=1}^db_j\theta_j^2}\, d \mc{H}^{d-1}&
=\pi^{-d/2}\int_{\R^d} \frac{b_i x_i^2}{\sum_{j=1}^db_jx_j^2}e^{-|x|^2} \, dx\\
&=\pi^{-d/2}\int_0^{+\infty} b_i\int_{\R^d}x_i^2e^{-\sum_{j=1}^d(1+t b_j)x_j^2}\, dx dt\\
&=\pi^{-d/2}\int_0^{+\infty} b_i\left(\int_{\R} x_i^2e^{-(1+t b_j)x_i^2} \, dx_i\right) \prod_{j=1, j \neq i}^d\left(\int_{\R} e^{-(1+t b_j)x_j^2} \, dx_j\right) dt\\
&=L_i(a),
\end{align*}
since for any $\alpha>0$
\begin{equation}\label{proof_lemma_elipsoid_dge3_7}
\int_{\R} e^{-\alpha x^2} \, dx=\sqrt{\frac{\pi}{\alpha}} \quad  \text{ and } \quad 
\int_{\R} x^2e^{-\alpha x^2} \, dx=\frac{\sqrt\pi}{2\alpha^{3/2}}.
\end{equation}
Note that the second identity in \eqref{proof_lemma_elipsoid_dge3_7} follows from the first taking the derivative with respect to $\alpha$.
In conclusion, we have proved \eqref{proof_lemma_elipsoid_dge3_6}. Setting  $b_i:=e^{p_i}$,
\begin{equation}\label{proof_lemma_elipsoid_dge3_8}
L_i(a)=\fint_{\mb{S}^{d-1}}\frac{e^{p_i} \theta_i^2}{\sum_{j=1}^de^{p_j}\theta_j^2}\, d \mc{H}^{d-1} \quad  \text{ for } i=1, \dots, d
\end{equation}
which implies that, letting $\Phi_0$ be as in \eqref{def_PhiD} with  $d_i=0$,
\begin{equation}
\nabla \Phi_0(p)=(L_1(a), \dots,L_d(a)).
\end{equation}
We claim that $\Phi_0$ is strictly convex in $H$, where $H$ is as in \eqref{def_H}. 
To prove this claim let us set 
\begin{equation}
w_i(p,\theta):=\frac{e^{p_i} \theta_i^2}{\sum_{j=1}^de^{p_j}\theta_j^2}.
\end{equation}
Then, letting $\delta_{i,j}$ the usual Kronecker delta,
\begin{equation}
\pd{w_i}{p_j}=\delta_{i,j}w_i-w_i w_j.
\end{equation}
Hence, since $\sum_{j=1}^d w_j=1$, for any $z \in \R^d$
\begin{align*}
z\cdot D^2\Phi_0 z&=\fint_{\mb{S}^{d-1}}\sum_{i,j=1}^dz_iz_j (\delta_{i,j}w_i-w_i w_j)\, d \mc{H}^{d-1}\\
&=\fint_{\mb{S}^{d-1}}\sum_{i=1}^d w_i z_i^2-\left(\sum_{i=1}^dw_iz_i\right)^2\, d \mc{H}^{d-1}\\
&=\frac{1}{2}\fint_{\mb{S}^{d-1}}2\left(\sum_{i=1}^d w_i z_i^2\right)\left(\sum_{j=1}^d w_j\right)
-2\left(\sum_{i=1}^dw_iz_i\right)^2\, d \mc{H}^{d-1}\\
&=\frac{1}{2}\fint_{\mb{S}^{d-1}}\sum_{i,j=1}^dw_iw_j (z_i-z_j)^2\, d \mc{H}^{d-1} \ge 0,
\end{align*}
and in particular  $z\cdot D^2\Phi_0 z>0$ on $H\setminus \{0\}$.
Next we claim that, letting $\Phi_D$ be as in \eqref{def_PhiD},   $\Phi_D$ is coercive in $H$.
Indeed,
\begin{equation}
\Phi_D(p) \ge \max_{i=1, \dots, d}{p_i}+\fint_{\mb{S}^{d-1}}\log(\theta_1^2)\, d \mc{H}^{d-1}-\sum_{j=1}^d d_j p_j.
\end{equation}
Furthermore, using the definition of $H$ and the fact that $\sum_{i=1}^dd_i=1$, we get that 
\begin{equation}
\sum_{i=1}^dd_ip_i <  \max_{i=1, \dots, d}{p_i}\quad\text{for all}\quad p=(p_1,\dots,p_d)\in H.
\end{equation}
This implies that the continuous function 
$$p=(p_1,\dots,p_d)\mapsto\max_{i=1, \dots, d}{p_i}- \sum_{j=1}^d d_j p_j,$$ 
is bounded away from zero on the compact set $\mb{S}^{d-1} \cap H$. Thus, there is $\delta>0$ such that 
\begin{equation}
 \max_{i=1, \dots, d}{p_i}-\sum_{j=1}^d d_j p_j  \ge \delta |p|\quad\text{for all}\quad p=(p_1,\dots,p_d)\in H.
\end{equation}
Hence, $\Phi_D$ is coercive in $H$.

By strict convexity and coercivity it follows that $\Phi_D$ admits a unique minimum  $p^* \in H$ and 
\begin{equation}
\nabla \Phi_D(p^* )=\la (1, \dots, 1),
\end{equation}
for some Lagrange multiplier $\la \in \R$. Equivalently
\begin{equation}
\nabla \Phi_0(p^* )-(d_1,\dots,d_d)=\la (1, \dots, 1)
\end{equation}
Since $\nabla \Phi_0(p)=(L_1(a),\dots, L_d(a))$ for any $p \in  \R^d$, by \eqref{proof_lemma_elipsoid_dge3_4} and $\sum_{j=1}^d d_j=1$,
we must have $\la=0$, that is,
\begin{equation}
\nabla \Phi_0(p^* )=(d_1,\dots,d_d).
\end{equation}
If we now set $a^*_i:=e^{-p_i^*/2}$ we have, since $p^* \in H$,
\begin{equation}
(L_1(a^*),\dots, L_d(a^*))=(d_1,\dots,d_d) \quad \text{ and } \quad P(a^*)=\prod_{j=1}^d e^{-p^*_j/2}=1.
\end{equation}
Let us show that $a^*$ is unique. If $(L_1(a^*),\dots, L_d(a^*))=(L_1(a),\dots, L_d(a))$ for some $a \in (0,+\infty)^d$ with $P(a)=1$ then setting $p_i=-2\log(a_i)$ we have that both $p,p^* \in H$ and
\begin{equation}
\nabla \Phi_0(p)=(d_1,\dots,d_d)=\nabla \Phi_0(p^*).
\end{equation}
Hence, the strict convexity of $\Phi_D$ in $H$ implies that $p=p^*$ thus  $a=a^*$.
Indeed, let us argue by contradiction supposing that  the map $g(t):=\Phi_0(p+t(p^*-p))$ defined on $[0,1]$ is not constant. Then $g$ takes values into $H$ and  satisfies 
\begin{equation}
g''(t):=(p^*-p)\cdot D^2\Phi_0(p+t(p^*-p)) (p^*-p) >0.
\end{equation}
Hence, $g'$ is strictly increasing while 
\begin{equation}
g'(0)=\nabla \Phi_0(p)=\nabla \Phi_0(p^*)=g'(1),
\end{equation}
a contradiction.

 \noindent For any $r>0$, by  \eqref{proof_lemma_elipsoid_dge3_3}
\begin{equation}
(L_1(ra^*),\dots, L_d(ra^*))=(d_1,\dots,d_d)
\end{equation}
and imposing the volume condition 
\begin{equation}
1=|E(ra^*)|=\omega_dr^dP(a)=\omega_dr^d
\end{equation}
we obtain $r=\omega_d^{-1/d}$. In conclusion, we have completed the proof in the case $B$ is a diagonal matrix. 

To deal with the general case, let $R$ be as in \eqref{def_BRD} so that $B=RDR^T$ and let $E_D$ be the unique ellipsoid obtained for the diagonal matrix $D$
of vouìlume $1$.
Then, by \eqref{def_NE} and the radiality of $\Gamma$,
\begin{multline}
N_{RE_D}(x)=\int_{RE_D}\Gamma(x-y) \, dy=\int_{E_D}\Gamma(x-Ry) \, dy=\int_{E_D}\Gamma(R^Tx-y) \, dy=N_{E_D}(R^Tx).
\end{multline}
Differentiating on $RE_D$
\begin{equation}
\nabla N_{RE_D}(x)=R \nabla N_{E_D}(R^Tx)=-RDR^Tx=-Bx \quad \text{ in } RE_D,
\end{equation}
thus completing the proof.
\end{proof}

\begin{lemma}\label{lemma_elipsoid_d2}
Let $k\ge d=2$ and let $A$ be a $k\times 2$ matrix of rank $2$. Let $S:=(A^TA)^{1/2}$, $B:=(\text{\rm Tr}(S))^{-1}S$, $D$ and $R$ be as in \eqref{def_BRD}.
Then, the unique ellipsoid $E_A$ centered in $0$ with volume $1$ such that
\begin{equation}\label{eq_nabla_NEa_d2}
\nabla N_{E_A}(x)=-Bx \quad \text{ in } E_A
\end{equation}
is  given by,  
\begin{equation}\label{def_EA_d2}
E_A=R E(\pi^{-1/2}a^*),
\end{equation}
with
\begin{equation}
 a^*:=\left(\sqrt{\frac{d_2}{d_1}},\sqrt{\frac{d_1}{d_2}}\right),
\end{equation}
where $\{d_1,d_2\}$ are the entries on the diagonal $2\times 2$ matrix $D$.
\end{lemma}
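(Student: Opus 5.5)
First reduce to the diagonal case $B=D$, exactly as at the end of the proof of Lemma \ref{lemma_elipsoid_dge3}. The radiality of $\Gamma$ (also the logarithmic kernel) gives $N_{RE}(x)=N_E(R^Tx)$. Hence $\nabla N_{RE}(x)=R\nabla N_E(R^Tx)$, and it suffices to find the unique ellipsoid $E(a)$ with $|E(a)|=1$ and $\nabla N_{E(a)}(x)=-(d_1x_1,d_2x_2)$ in $E(a)$.

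For the diagonal case, use the explicit planar formula from \cite{CMMRSV_d2}. For $a=(a_1,a_2)$, the Newtonian potential of $E(a)$ inside $E(a)$ is a quadratic polynomial plus a constant:
\begin{equation*}
N_{E(a)}(x)=c(a)-\frac{1}{2}\left(\frac{a_2}{a_1+a_2}x_1^2+\frac{a_1}{a_1+a_2}x_2^2\right)\quad\text{in } E(a).
\end{equation*}
One can justify this in two ways. The first is to note that $-\Delta$ of the quadratic equals $1$ and to verify it against the $d=2$ analogue of the representation in the proof of Lemma \ref{lemma_elipsoid_dge3}, where the log kernel only alters the constant. The second is to compute $L_i(a)=\frac{a_1a_2}{2}\int_0^\infty \frac{dt}{(a_i^2+t)\sqrt{(a_1^2+t)(a_2^2+t)}}$ directly. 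With $d=2$, the substitution $s=\sqrt{(a_1^2+t)/(a_2^2+t)}$ gives the elementary values $L_1=a_2/(a_1+a_2)$ and $L_2=a_1/(a_1+a_2)$. Either way,
\begin{equation*}
\nabla N_{E(a)}(x)=-\left(\frac{a_2}{a_1+a_2}x_1,\frac{a_1}{a_1+a_2}x_2\right),
\end{equation*}
and the coefficients sum to $1$, consistent with \eqref{proof_lemma_elipsoid_dge3_4}.

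Next, solve the equations $a_2/(a_1+a_2)=d_1$ and $a_1/(a_1+a_2)=d_2$, using $d_1+d_2=1$ and $d_i>0$. These hold if and only if $a_1/a_2=d_2/d_1$. The ratio is therefore determined, and the $0$-homogeneity $L_i(ra)=L_i(a)$ from \eqref{proof_lemma_elipsoid_dge3_3} leaves only a scale free. Normalizing $a_1a_2=1$ gives $a^*=(\sqrt{d_2/d_1},\sqrt{d_1/d_2})$. The volume condition $\pi r^2a_1^*a_2^*=1$ then forces $r=\pi^{-1/2}$, which is \eqref{def_EA_d2}.

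Uniqueness among centered ellipsoids follows from the reduction. Any centered ellipsoid is $QE(a)$ for some rotation $Q$ and some $a$, and its gradient matrix is $Q\,\mathrm{diag}(L_1,L_2)Q^T$. This must equal $B$, so the spectral data are forced. If $d_1\ne d_2$, this pins down the axes up to ordering, and ordering swaps are consistent. If $d_1=d_2$, then $a_1=a_2$ and we get a disc, for which $Q$ is irrelevant. The main obstacle is making the interior formula rigorous with the logarithmic kernel. If the citation does not directly give the quadratic form, I would instead compute $L_i$ via the integral above, justified as in $d\ge3$ by differentiating under the integral sign. This derivative computation does not see the logarithmic normalization constant.
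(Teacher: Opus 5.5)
Your proposal is correct and follows the paper's proof essentially verbatim. Like the paper, you reduce to $B=D$ via $N_{RE}(x)=N_E(R^Tx)$, invoke the planar formula $\nabla N_{E(a)}(x)=-\bigl(\tfrac{a_2}{a_1+a_2}x_1,\tfrac{a_1}{a_1+a_2}x_2\bigr)$ from \cite{CMMRSV_d2}, deduce $a_1/a_2=d_2/d_1$, and fix the scale with $\pi a_1a_2=1$. Your extra remarks go slightly beyond the paper, which leaves uniqueness among arbitrarily oriented centered ellipses implicit: the elementary evaluation of $L_i$ for $d=2$ is correct, and the spectral argument forcing the axes of $QE(a)$ onto the eigenvectors of $B$ is sound. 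The only caveat is that, for the logarithmic kernel, the $t$-integral representation of $\nabla N_{E(a)}$ would itself need a proof if you used it in place of the citation.
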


\begin{proof}
Suppose at first $B$ is a diagonal matrix, where $B$ is as in \eqref{def_BRD}. In view of \cite[Proposition 3.1]{CMMRSV_d2} with $\alpha=0$, for any ellipse  $E(a)$ with $a \in (0,+\infty)^2$
\begin{equation}
\nabla N_{E(a)}(x)=-\left(\frac{a_2 x_1}{a_1+a_2}, \frac{a_1 x_2}{a_1+a_2}\right)  \quad \text{ in } E(a).
\end{equation}
Imposing the conditions 
\begin{equation}
d_1=\frac{a_2}{a_1+a_2}\qquad\text{and}\qquad 
d_2=\frac{a_1}{a_1+a_2},
\end{equation}
we obtain 
\begin{equation}
\frac{a_1}{a_2}=\frac{d_2}{d_1}.
\end{equation}
Furthermore,
\begin{equation}
1=|E(a)|=\pi a_1a_2.
\end{equation}
Thus, we must have 
\begin{equation}
\pi^{1/2} a=\left(\sqrt{\frac{d_2}{ d_1}},\sqrt{\frac{d_1}{ d_2}}\right).
\end{equation}
The general case follows arguing as in Lemma \ref{lemma_elipsoid_dge3}.
\end{proof}


\section{The value of $\La^*(A)$}\label{subsec_La}
In this section we prove Theorem \ref{theo_main_1} and we show that the function $V_A$ from Theorem \ref{theo_main_2} is a minimizer of \eqref{prob_min}; we will complete the proof of Theorem \ref{theo_main_2} in the next section, where we will prove that $V_A$ is the unique minimizer of \eqref{prob_min}. 
We start with the following lemma, which allows to replace the measure constraint in \eqref{prob_min} by an inequality. 

\begin{lemma}\label{lemma_ge_1}
Let $d\ge 2$, $k\ge d$ and $A$ be a $k\times d$ matrix of rank $d$.
Then,
\begin{equation}\label{prob_min_ge_1}
\Lambda^\ast(A)=\inf\left\{\int_{\R^d} |\nabla V|^2\, dx:V\in D^{1,2}(\R^d, \R^k),  |\{V(x)= Ax\}|\ge1\right\}
\end{equation}
and any minimizer $V$ of \eqref{prob_min_ge_1} minimizes \eqref{prob_min}, that is,  $|\{V(x)= Ax\}|=1$.
\end{lemma}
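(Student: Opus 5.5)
Let $\Lambda^\ast_{\ge}(A)$ denote the infimum on the right-hand side of \eqref{prob_min_ge_1}. Since the admissible class of \eqref{prob_min} is contained in that of \eqref{prob_min_ge_1}, we have $\Lambda^\ast_{\ge}(A)\le\Lambda^\ast(A)$. The whole proof is a scaling argument: we shrink the contact set of a competitor while controlling the Dirichlet energy.

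For the reverse inequality, take $V\in D^{1,2}(\R^d,\R^k)$ with $m:=|\{V(x)=Ax\}|\ge1$, and set $r:=m^{1/d}\ge1$. Define
\[
V_r(x):=\frac1r V(rx).
\]
Then $V_r(x)=Ax$ if and only if $V(rx)=A(rx)$, so $\{V_r=Ax\}=r^{-1}\{V=Ax\}$. This set has measure $r^{-d}m=1$.

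The function $V_r$ still lies in $D^{1,2}$. For $d\ge3$ we have $\|V_r\|_{L^{2^*}}^{2^*}=r^{-2^*}r^{-d}\|V\|_{L^{2^*}}^{2^*}$. For $d=2$, the $L^{2,\infty}$ quasinorm of $V_r$ is exactly $r^{-1}r^{-1}$ times a multiple of $[V]_{L^{2,\infty}}$, up to the explicit scaling, which is finite.

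The energy scales as
\[
\int_{\R^d}|\nabla V_r|^2\,dx=\int_{\R^d}|\nabla V(rx)|^2\,dx=r^{-d}\int_{\R^d}|\nabla V|^2\,dy\le\int_{\R^d}|\nabla V|^2\,dy.
\]
Hence $\Lambda^\ast(A)\le\int|\nabla V|^2$ for every competitor of \eqref{prob_min_ge_1}, which gives $\Lambda^\ast(A)\le\Lambda^\ast_{\ge}(A)$ and therefore equality.

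For the second claim, let $V$ minimize \eqref{prob_min_ge_1} and suppose $m>1$, so $r>1$. If $\int|\nabla V|^2>0$, then $V_r$ is admissible for \eqref{prob_min} with energy $r^{-d}\int|\nabla V|^2<\Lambda^\ast_{\ge}(A)=\Lambda^\ast(A)$, a contradiction.

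It remains to exclude $\int|\nabla V|^2=0$. In that case $V$ is constant, and since it lies in $L^{2^*}$ (or $L^{2,\infty}$), we get $V\equiv0$. Then $\{V=Ax\}=\ker A=\{0\}$ because $\mathrm{rk}(A)=d$, so this set has measure zero, contradicting $m\ge1$.

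So $m=1$, and $V$ is admissible for \eqref{prob_min} with energy $\Lambda^\ast(A)$, i.e. it is a minimizer. The only delicate point is this nondegeneracy check, namely that a minimizer has positive energy, which is exactly where the rank condition is used.
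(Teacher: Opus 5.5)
Your proof is correct and is essentially the paper's scaling argument: the paper uses $V_m(x)=m^{-1/d}V(m^{1/d}x)$, whose energy is $\frac1m\int_{\R^d}|\nabla V|^2\,dx$. Your checks that $V_r\in D^{1,2}(\R^d,\R^k)$, and that a minimizer has positive energy (so that $m>1$ would give a strict decrease), make explicit two points the paper leaves implicit.

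One further point you could add is that $m<\infty$, which is also implicit in the paper and is needed to define $r=m^{1/d}$. On the contact set $|V(x)|=|Ax|\ge\sigma|x|$, where $\sigma>0$ is the smallest singular value of $A$ (this uses the rank condition). So the part of the contact set outside $B_R$ lies in $\{|V|\ge\sigma R\}$, which has finite measure because $V\in L^{2^*}$ (resp. $L^{2,\infty}$ if $d=2$).
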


\begin{proof}
We follow the proof of our \cite[Proposition 4.1]{SV_blowups}.
Let $V\in D^{1,2}(\R^d, \R^k)$. We set 
$$m:=|\{V(x)= Ax\}|\ge1,$$ 
and we define the rescaling
\begin{equation}
V_m(x):=\frac{1}{m^{1/d}}V(m^{1/d}x).
\end{equation}
It follows that
\begin{equation}
|\{V_m(x)= Ax\}|=m^{-1}|\{V(x)= Ax\}|=1
\end{equation}
and by a change of variables 
\begin{equation}\label{proof_lemma_ge_1_1}
\int_{\R^d}|\nabla V_m|^2 \,dx=\frac{1}{m}\int_{\R^d}|\nabla V|^2 \,dx.
\end{equation}
Hence, 
\begin{multline}
\inf\left\{\int_{\R^d} |\nabla V|^2\, dx:V\in D^{1,2}(\R^d, \R^k),  |\{V(x)= Ax\}|\ge1\right\}\\
=\inf_{m\ge 1}\inf\left\{\int_{\R^d} |\nabla V|^2\, dx:V\in D^{1,2}(\R^d, \R^k),  |\{V(x)= Ax\}|=m\right\}\\
=\inf_{m\ge 1}\inf\left\{m\int_{\R^d} |\nabla V|^2\, dx:V\in D^{1,2}(\R^d, \R^k),  |\{V(x)= Ax\}|=1\right\}\\
=\inf\left\{\int_{\R^d} |\nabla V|^2\, dx:V\in D^{1,2}(\R^d, \R^k),  |\{V(x)= Ax\}|=1\right\}=\Lambda^*(A),
\end{multline}
thus we have proved \eqref{prob_min_ge_1}. %
Furthermore, if $V$ is a minimizer of \eqref{prob_min_ge_1} then, in view of \eqref{proof_lemma_ge_1_1}, we must have  $|\{V(x)= Ax\}|=1$.
Thus, any minimizer $V$ of \eqref{prob_min_ge_1} is a minimizer of \eqref{prob_min} as well.
\end{proof}

In order to prove Theorem \ref{theo_main_1}, we first find a lower bound for $\Lambda^*(A)$ and then we build a function $V_A$ that realizes this lower bound. 
The main result of the section is the following.

\begin{proposition}\label{prop_LA}
Let $d \ge 2$, $k \ge d$,  $A \in \R^{k,d}$ be a $k \times d$ matrix with ${\rm{rk}}(A)=d$.  Then, the following holds: 
\begin{enumerate}[\quad\rm(i)]
\item For every $V\in D^{1,2}(\R^d, \R^k)$ such that $|\{V(x)= Ax\}|\ge1$ we have 
\begin{equation}\label{eq_LA_lower_bound}
\int_{\R^d} |\nabla V|^2\, dx\ge(\Tr(\sqrt{A^TA}))^2.
\end{equation}
\item Consider the function 
\begin{equation}
V_A:\R^k\to\R^d\ ,\qquad V_A:=-(\Tr(\sqrt{A^TA}))U \nabla N_{E_A},
\end{equation}
where $U$ is the $k\times d$ matrix from \eqref{def_SU} and $N_{E_A}$ is the solution from  \eqref{def_NE} with $E_A$ given by \eqref{def_EA_dge3} or \eqref{def_EA_d2} if $d \ge 3$ or $d=2$, respectively. Then, 
$V_A\in D^{1,2}(\R^d, \R^k)$, $|\{V_A(x)= Ax\}|=1$ and 
\begin{equation}\label{eq_LA_upper_bound}
\int_{\R^d} |\nabla V_A|^2\, dx=(\Tr(\sqrt{A^TA}))^2.
\end{equation}
\end{enumerate}
In particular, {\rm(i)} and {\rm(ii)} imply that 
\begin{equation}\label{eq_LA}
\La^*(A)=(\Tr(\sqrt{A^TA}))^2
\end{equation}
and that 
$V_A$ is a minimizer of \eqref{prob_min} and \eqref{prob_min_ge_1}.
\end{proposition}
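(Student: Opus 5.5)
The approach is to reduce to the case $k=d$ with a symmetric positive datum by projecting through the isometry $U$. The lower bound then follows from the pointwise bound $|\nabla W|^2$ versus $(\dive W)^2$ after integration, and equality holds for $V_A$ because it is curl-free with divergence supported on $E_A$.

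\textbf{Lower bound (i).} Let $V\in D^{1,2}(\R^d,\R^k)$ with $E:=\{V(x)=Ax\}$, $|E|\ge1$.
\begin{enumerate}[\quad(a)]
\item Set $W:=U^TV:\R^d\to\R^d$. Since $U^TU=\mathrm{Id}_d$, the map $U^T$ is a contraction, so $|\nabla W|\le|\nabla V|$ pointwise and $W\in D^{1,2}(\R^d,\R^d)$.
\item On $E$ we have $W(x)=U^TAx=U^TUSx=Sx$. Two $H^1_{loc}$ functions coincide on $E$, so their gradients agree a.e. on $E$. Hence $\nabla W=S$ a.e. on $E$, and $\dive W=\Tr(S)=\Tr(\sqrt{A^TA})$ a.e. on $E$.
\item I will use the identity
\begin{equation*}
\int_{\R^d}|\nabla W|^2\,dx=\int_{\R^d}(\dive W)^2\,dx+\frac12\int_{\R^d}\sum_{i,j}(\partial_iW_j-\partial_jW_i)^2\,dx .
\end{equation*}
It comes from $\int\partial_iW_j\,\partial_jW_i=\int\partial_iW_i\,\partial_jW_j$ (integrate by parts twice).
\item Combining,
\begin{equation*}
\int|\nabla V|^2\ge\int|\nabla W|^2\ge\int_E(\dive W)^2=|E|\,(\Tr S)^2\ge(\Tr S)^2 .
\end{equation*}
\end{enumerate}

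\textbf{Main obstacle.} The delicate step is justifying (c) for $W\in D^{1,2}$ rather than for $C^\infty_c$ fields. I would multiply by a cutoff $\eta_R$ (equal to $1$ on $B_R$, supported in $B_{2R}$, with $|\nabla\eta_R|\le C/R$), apply the identity to $\eta_RW$, and let $R\to\infty$. The error terms are bounded by $R^{-1}\|W\|_{L^2(B_{2R}\setminus B_R)}\|\nabla W\|_{L^2}$ plus $R^{-2}\|W\|^2_{L^2(B_{2R}\setminus B_R)}$.
\begin{itemize}
\item For $d\ge3$: by Hölder with $W\in L^{2^*}$, $R^{-1}\|W\|_{L^2(B_{2R}\setminus B_R)}\lesssim\|W\|_{L^{2^*}(\R^d\setminus B_R)}\to0$.
\item For $d=2$: Hölder in Lorentz spaces gives $\|W\|_{L^2(B_{2R})}\lesssim R^{\varepsilon}$-type bounds only up to a logarithm, so the errors are merely bounded. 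This is the point needing care. I would instead choose logarithmic cutoffs, or approximate $\nabla W$ in $L^2$ by gradients of compactly supported fields using the decay of the $L^{2,\infty}$ quasinorm; I expect this to work.
\end{itemize}

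\textbf{Upper bound (ii).} Put $T:=\Tr(S)$ and $V_A=-T\,U\nabla N_{E_A}$.
\begin{itemize}
\item \emph{Contact on $E_A$.} By Lemmas \ref{lemma_elipsoid_dge3} and \ref{lemma_elipsoid_d2}, on $E_A$ we have $V_A=-TU(-Bx)=USx=Ax$, so $E_A\subset\{V_A=Ax\}$ and this set has measure $\ge1$.
\item \emph{Membership in $D^{1,2}$.} Since $E_A$ is bounded, $\nabla N_{E_A}=O(|x|^{1-d})$ at infinity. This lies in $L^{2^*}$ for $d\ge3$ and in $L^{2,\infty}$ for $d=2$. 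Moreover $D^2N_{E_A}\in L^2$, by Plancherel or Calderón–Zygmund since $\Delta N_{E_A}=-\chi_{E_A}\in L^2$.
\item \emph{Energy.} Because $U$ is an isometry, $|\nabla V_A|=T|D^2N_{E_A}|$. The field $\nabla N_{E_A}$ is curl-free, so (c) gives
\begin{equation*}
\int|\nabla V_A|^2=T^2\int(\Delta N_{E_A})^2=T^2|E_A|=T^2 .
\end{equation*}
\item \emph{Exact contact measure.} If $|\{V_A=Ax\}|=m>1$, rescaling as in Lemma \ref{lemma_ge_1} would give a competitor with energy $T^2/m<T^2$, contradicting (i). Hence $|\{V_A=Ax\}|=1$.
\end{itemize}

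Together with Lemma \ref{lemma_ge_1}, (i) and (ii) yield \eqref{eq_LA} and the minimality of $V_A$ for both \eqref{prob_min} and \eqref{prob_min_ge_1}.
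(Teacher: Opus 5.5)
Your argument is correct and is essentially the paper's: the same reduction $W=U^TV$, the same chain $\Tr(S)^2\le\int_K(\dive W)^2\le\int|\nabla W|^2\le\int|\nabla V|^2$, and the same identity $\int|D^2N_{E_A}|^2=\int|\Delta N_{E_A}|^2$ for the upper bound. The only difference is that you obtain $\int(\dive W)^2\le\int|\nabla W|^2$ from the div--curl identity, which the paper proves and uses only in Section \ref{sec_rig}, instead of from Plancherel and $|\xi\cdot\widehat W|\le|\xi||\widehat W|$.

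The $d=2$ obstruction you flag is not real, and the ordinary cutoff already works. Poincar\'e on $B_{2R}\setminus B_R$, together with $\bigl|\fint_{B_{2R}\setminus B_R}W\bigr|\lesssim R^{-1}[W]_{L^{2,\infty}}$, gives $R^{-1}\|W\|_{L^2(B_{2R}\setminus B_R)}\lesssim\|\nabla W\|_{L^2(B_{2R}\setminus B_R)}+R^{-1}[W]_{L^{2,\infty}}\to0$. The Fourier route avoids cutoffs altogether.
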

\begin{proof} We start with the proof of claim (i). 
Let $U$ be the $k\times d$ matrix from \eqref{def_SU} and let 
$$W(x):=U^T V(x).$$ Then $W$ takes values in $\R^d$ and 
\begin{equation}\label{proof_lemma_lower_bound_0}
W(x)=U^T V(x)=U^TA x=Sx \quad \text{ a.e. on } K,
\end{equation}
where $S$ is as in \eqref{def_SU}. In  particular
\begin{equation}\label{proof_lemma_lower_bound_0_5}
\nabla W(x)=S \quad \text{ a.e. on } K.
\end{equation}
It follows that the divergence of $W$ is constant on $K$ and more precisely
\begin{equation}\label{proof_lemma_lower_bound_1}
\dive(W)=\Tr(S) \quad \text{ a.e. on } K.
\end{equation}
We notice that $W \in H^1_{loc}(\R^d,\R^d)$ and $\nabla W \in  L^2(\R^d,\R^{d,d})$. Thus, taking the Fourier transform and by the Cauchy-Schwarz inequality, we get
\begin{equation}\label{proof_lemma_lower_bound_2}
\int_{\R^d}  |\dive(W)|^2 \, dx=\int_{\R^d}  |\widehat{\dive(W)}|^2 \, d\xi =\int_{\R^d}  |\xi \cdot \widehat W|^2 \, d\xi
\le\int_{\R^d}  |\xi|^2  |\widehat W|^2 \, d\xi= \int_{\R^d}  |\nabla W|^2 \, dx.
\end{equation}
Thanks to  \eqref{proof_lemma_lower_bound_1}, \eqref{proof_lemma_lower_bound_2}, and the constraint $|K|\ge 1$, we get
\begin{equation}\label{proof_lemma_lower_bound_3}
\begin{array}{rl}
\displaystyle\Tr(S)^2\le \int_K  \displaystyle\Tr(S)^2 \, dx =\int_K  |\dive(W)|^2 \, dx&\displaystyle \le \int_{\R^d}  |\dive(W)|^2 \, dx\smallskip\\
&\displaystyle\le  \int_{\R^d}  |\nabla W|^2 \, dx
=\int_{\R^d}  |U^T\nabla V|^2 \, dx \le \int_{\R^d}  |\nabla V|^2 \, dx,
\end{array}
\end{equation}
which concludes the proof of \eqref{eq_LA_lower_bound}.\medskip

\noindent{We next prove claim {\rm(ii)}.} In view of Lemmas \ref{lemma_elipsoid_dge3} and \ref{lemma_elipsoid_d2}, we have 
$$ \nabla N_{E_A}(x)=-Bx \quad  \text{ for } x\in E_A.$$
Thus, using the definition of $V_A$ and the identities \eqref{def_SU} and \eqref{def_BRD}, we get
\begin{equation}
V_A(x)=-(\Tr(\sqrt{A^TA}))U \nabla N_{E_A}(x)=(\Tr(\sqrt{A^TA}))UBx=USx=Ax \quad  \text{ in } E_A.
\end{equation}
In particular, we obtain the following estimate for the measure of the contact set: 
\begin{equation}
|\{V_A(x)=Ax\}| \ge |E_A|=1.
\end{equation}
We next prove \eqref{eq_LA_upper_bound}. By \eqref{def_SU}, \eqref{def_NE} and the Plancherel identity for the Fourier transform
\begin{align}
\label{proof_prop_LA_1}
\int_{\R^d} |\nabla V_A|^2 \, dx&=(\Tr(\sqrt{A^TA}))^2\int_{\R^d} |D^2N_{E_A}|^2 \, dx\\
&=(\Tr(\sqrt{A^TA}))^2\int_{\R^d} \sum_{i,j=1}^d\xi_j^2\xi_i^2|\widehat N_{E_A}|^2 \, d\xi\notag\\
&=(\Tr(\sqrt{A^TA}))^2\int_{\R^d} \left(\sum_{i=1}^d\xi_i^2\right)^2|\widehat N_{E_A}|^2 \, d\xi\notag\\
&=(\Tr(\sqrt{A^TA}))^2\int_{\R^d} |\Delta N_{E_A}|^2 \, dx\notag\\
&=(\Tr(\sqrt{A^TA}))^2\int_{\R^d} |\chi_{E_A}|^2 \, dx=(\Tr(\sqrt{A^TA}))^2.\notag
\end{align}
In particular, $\nabla V_A \in L^2(\R^d,\R^{k,d})$ for $d \ge 2$ and, by the Sobolev inequality,  $V_A \in D^{1,2}(\R^d,\R^k)$ for $d \ge3$.
If $d=2$ let us check that $V_A \in L^{2,\infty}(\R^d,\R^k)$. By \eqref{def_Gamma} and \eqref{def_NE}
\begin{equation}
\nabla N_{E_A}(x)=-\frac{1}{2 \pi}\int_{E_A} \frac{x-y}{|x-y|^2} \, dy.
\end{equation}
Let $R>0$ be such that $E_A \subset B_R$. Then on $\R^2 \setminus B_{2R}$ we have $|x-y| \ge |x|-|y| \ge |x|/2$ for any $y \in E_A$, thus
\begin{equation}
|\nabla N_{E_A}(x)| \le \frac{1}{2 \pi}\int_{E_A} \frac{1}{|x-y|} \, dy \le \frac{|E_A|}{\pi |x|}=\frac{1}{\pi |x|}.
\end{equation}
On the other hand on $B_{2R}$
\begin{equation}
|\nabla N_{E_A}(x)| \le \frac{1}{2 \pi}\int_{E_A} \frac{1}{|x-y|} \, dy=\frac{1}{2 \pi}\int_{E_A-x} \frac{1}{|z|} \, dz 
\le \frac{1}{2 \pi}\int_{B_{3R}} \frac{1}{|z|} \, dz =\frac{3}{2}R.
\end{equation}
In conclusion, for some constant $C>0$ depending only on $R$ and $A$,
\begin{equation}
|V_A(x)| \le \Tr(\sqrt{A^TA})|\nabla N_{E_A}(x)| \le \frac{C}{1+|x|} \quad \text{ in } \R^2.
\end{equation}
It follows that if $0<\la\le C$
\begin{equation}
\{|V_A| > \la\} \subset B_{(C-\la)/\la} \subset  B_{C/\la},
\end{equation}
while $\{|V_A| > \la\}=\emptyset$ if $\la \ge C$.
Hence, for any $\la >0$
\begin{equation}
\la |\{|V_A| > \la\}|^{1/2} \le C \sqrt{\pi},
\end{equation}
thus showing that $V_A \in L^{2,\infty}(\R^2,\R^k)$ in view of \eqref{def_quasinorm_L2infty} and \eqref{def_L2infty}. This concludes the proof of (ii).\medskip 

\noindent{\it Conclusion.} By (ii), we know that $V_A$ is an admissible competitor in \eqref{prob_min_ge_1}. Thus, combining the bounds \eqref{eq_LA_lower_bound} and \eqref{eq_LA_upper_bound}, we get that $V_A$ minimizes \eqref{prob_min_ge_1}. Finally, thanks to Lemma \ref{lemma_ge_1}, we have that $|\{V_A=Ax\}|=1$ and that \eqref{eq_LA} holds. 
\end{proof}

\section{Rigidity}\label{sec_rig}

In this section we complete the proof of Theorem \ref{theo_main_2} by showing that any minimizer must have an ellipsoidal coincidence set. 
The Hopf-type argument used in the proof was developed in \cite[Proof of the main theorem, step 4]{EW_short_proof} in the context of the obstacle problem.

\begin{proof}[Proof of Theorem \ref{theo_main_2}]
We have already proved \eqref{eq_La_theo_main} in Proposition \ref{prop_LA}.
Let $V$ be a minimizer of \eqref{prob_min} and $K:=\{V(x)=Ax\}$ its coincidence set, in particular $|K|=1$ and, by \cite[Theorem 1.6]{SV_blowups},
$K$ is bounded.
In view of Proposition \ref{prop_LA} 
\begin{equation}\label{proof_theo_main_1}
\int_{\R^d} |\nabla V|^2 \, dx =(\Tr(\sqrt{A^TA}))^2.
\end{equation}
As in Proposition \ref{prop_LA}, let us define $W:=U^TV$ with $U$ as in \eqref{def_SU}. 
Since $V \in D^{1,2}(\R^d, \R^k)$, it follows that $W \in D^{1,2}(\R^d, \R^d)$ by \eqref{def_SU}.

Furthermore, we recall that, by \eqref{proof_lemma_lower_bound_0}, \eqref{proof_lemma_lower_bound_0_5} and \eqref{proof_lemma_lower_bound_1},
with $S$ as in \eqref{def_SU},
\begin{equation}\label{proof_theo_main_2}
W(x)=Sx, \quad \nabla W(x)=S  \quad \text{ and } \quad  \dive(W)=\Tr(S) \quad \text{ a.e. in } K
\end{equation}
and that by \eqref{eq_La_theo_main}, \eqref{def_SU}, \eqref{proof_lemma_lower_bound_2} and the minimality of $V$
\begin{multline}\label{proof_theo_main_3}
\Lambda^*(A)=\Tr(S)^2= \int_K  |\dive(W)|^2 \, dx \le \int_{\R^d}  |\dive(W)|^2 \, dx \le  \int_{\R^d}  |\nabla W|^2 \, dx\\
=\int_{\R^d}  |U^T\nabla V|^2 \, dx \le \int_{\R^d}  |\nabla V|^2 \, dx=\Lambda^*(A).
\end{multline}
Hence, all the inequalities in \eqref{proof_theo_main_3} are actually equalities. In particular we must have, by \eqref{proof_theo_main_2},
\begin{equation}
\dive(W)=\Tr(S)\chi_K \quad \text{a.e. in } \R^d.
\end{equation}
Moreover, 
\begin{equation}\label{proof_theo_main_4}
\int_{\R^d}|\nabla W|^2\,dx =\int_{\R^d}|\dive(W)|^2+\frac{1}{2}\sum_{i,j=1}^d \left(\pd{W_j}{x_i}-\pd{W_i}{x_j}\right)^2 \, dx.
\end{equation}
Indeed, it is enough to check \eqref{proof_theo_main_4} for any $\varphi \in C^{\infty}_c(\R^d,\R^k)$ and, integrating by parts,
\begin{align*}
\frac{1}{2}\sum_{i,j=1}^d\int_{\R^d} \left(\pd{\varphi_j}{x_i}-\pd{\varphi_i}{x_j}\right)^2 \, dx
&=\frac{1}{2}\sum_{i,j=1}^d\int_{\R^d} \left(\left|\pd{\varphi_j}{x_i}\right|^2+\left|\pd{\varphi_i}{x_j}\right|^2
-2\pd{\varphi_j}{x_i}\pd{\varphi_i}{x_j}\right) \, dx\\
&=\int_{\R^d}|\nabla \varphi|^2 \, dx 
-\sum_{i,j=1}^d\int_{\R^d}\varphi_j\pd{^2\varphi_i}{x_jx_i} \, dx\\
&=\int_{\R^d}|\nabla \varphi|^2 \, dx 
-\sum_{i=1}^d\int_{\R^d}\varphi_j\pd{}{x_j}(\dive(\varphi)) \, dx\\
&=\int_{\R^d}|\nabla \varphi|^2 \, dx 
-\int_{\R^d}|\dive(\varphi)|^2 \, dx.
\end{align*}
By \eqref{proof_theo_main_3} and \eqref{proof_theo_main_4}, we conclude that 
\begin{equation}\label{proof_theo_main_5}
\pd{W_j}{x_i}=\pd{W_i}{x_j} \quad \text{ for any } i,j=1,\dots,d.
\end{equation}
Let $N_K$ be as in \eqref{def_NE} for $E=K$ and let us define $H:=W+\Tr(S) \nabla N_K$. Then by \eqref{eq_NE}
\begin{equation}\label{proof_theo_main_6}
\dive(H)=\dive(W)+\dive(\Tr(S)\nabla N_K)=\Tr(S)[\chi_K+\Delta N_K]=0.
\end{equation}
Furthermore, by \eqref{proof_theo_main_5},
\begin{equation}
\pd{H_j}{x_i}=\pd{H_i}{x_j} \quad \text{ for any } i,j=1,\dots,d
\end{equation}
and $\nabla H \in L^2(\R^d,\R^d)$ since, as in \eqref{proof_prop_LA_1},
\begin{multline}
\int_{\R^d} |\nabla H|^2\, dx \le 2\int_{\R^d} |\nabla W|^2\, dx +2\Tr(S)^2\int_{\R^d} |D^2 N_K|^2\, dx \\
\le 2\int_{\R^d} |\nabla W|^2\, dx +2\Tr(S)^2\int_{\R^d} |\Delta N_K|^2\, dx
=2\int_{\R^d} |\nabla W|^2\, dx +2\Tr(S)^2.
\end{multline}
Finally, by \eqref{proof_theo_main_4}, \eqref{proof_theo_main_5} and \eqref{proof_theo_main_6}
\begin{equation}\label{proof_theo_main_7}
\int_{\R^d} |\nabla H|^2\, dx=\int_{\R^d}|\dive(H)|^2+\frac{1}{2}\sum_{i,j=1}^d \left(\pd{H_j}{x_i}-\pd{H_i}{x_j}\right)^2 \, dx=0.
\end{equation}
It follows that for some $c \in \R^d$
\begin{equation}\label{proof_theo_main_7_5}
W=-\Tr(S) \nabla N_K +c \quad \text{a.e. in } \R^d.
\end{equation}
In dimension $d \ge 3$,  $\nabla N_K  \in D^{1,2}(\R^d,\R^d)$  by the Sobolev inequality into account. Hence, since also $W \in D^{1,2}(\R^d,\R^d)$, we must have $c=0$.
Arguing as in the proof of Proposition \ref{prop_LA}, since $K$ is bounded as already observed,  $\nabla N_K  \in D^{1,2}(\R^2,\R^2)$.
Hence,  we must have $c=0$ also for $d=2$.

In particular,
\begin{equation}\label{proof_theo_main_8}
\nabla N_K(x)=-\frac{W(x)}{\Tr(S)}=-\frac{U^T Ax}{\Tr(S)}=-\frac{Sx}{\Tr(S)}=-Bx \quad \text{ a.e. in } K.
\end{equation}
We observe that $\nabla N_K(x)$ is continuous. Indeed, by \eqref{def_Gamma} and \eqref{def_NE},
\begin{equation}
\nabla N_K(x)=-\frac{1}{d \omega_d}\int_K\frac{x-y}{|x-y|^d} \, dy=\frac{1}{d \omega_d}\int_{K-x}\frac{z}{|z|^{d}} \, dz,
\end{equation}
$K$ is bounded, and $\frac{z}{|z|^{d}}$ is locally integrable. Hence, \eqref{proof_theo_main_8} holds in a pointwise sense.

Let $E_A$ be as in \eqref{def_EA_dge3} or \eqref{def_EA_d2}.
We notice that,  by a change of variables, \eqref{def_Gamma} and \eqref{def_NE},
\begin{equation}
\nabla N_{tE_A}(x)=-\int_{tE_A}\frac{x-y}{|x-y|^d}\, dy=-t^{d}\int_{E_A}\frac{x-ty}{|x-ty|^d} \, dz=t\nabla N_{E_A}(x/t)
\end{equation}
thus in particular, by \eqref{eq_nabla_NEa_dge3} or \eqref{eq_nabla_NEa_d2}
\begin{equation}\label{proof_theo_main_9}
\nabla N_{tE_A}(x)=-Bx \quad \text{ in } E_{tA}.
\end{equation}
Furthermore, by  \eqref{def_EA_dge3} or \eqref{def_EA_d2}, for some $f_i \in \R$ for $i=1,\dots,d$ and $R$ as in \eqref{def_BRD}
\begin{align*}
E_A=R E(f)&=R\left(\left\{x \in \R^d:\sum_{i=1}^d\frac{x_i^2}{f_i^2}<1\right\}\right)\\
&=R(\{x \in \R^d:x \cdot Fx<1\})=\{x \in \R^d:x \cdot RFR^Tx<1\},
\end{align*}
where $F$ is the diagonal matrix with entries $\{f_i^{-2}\}_{i=1,\dots,d}$.
Let us define $G:= RFR^T$ and 
\begin{equation}
t^*:=\max_{x \in K}\sqrt{x\cdot G x}
\end{equation}
thus, since $\overline{t^* E_A}=\{x \in \R^d:x \cdot Gx\le(t^*)^2\}$,
\begin{equation}
K \subset \overline{t^* E_A} \quad \text{ and }\quad  K\cap  \partial (t^* E_A)\neq \emptyset.
\end{equation}
Let $x_0 \in K\cap  \partial (t^* E_A)$.
In view of \eqref{proof_theo_main_8} and \eqref{proof_theo_main_9}
\begin{equation}
\nabla N_K(x_0)=-Bx_0=\nabla N_{t^* E_A}(x_0),
\end{equation}
or equivalently, by \eqref{def_NE},
\begin{equation}\label{proof_theo_main_10}
\nabla N_{(t^* E_A)\setminus K}(x_0)=0.
\end{equation}
For any $y \in t^*E_A$, the Cauchy-Schwarz inequality with respect to the scalar product $ Gx\cdot y$ in $\R^d$ yields
\begin{equation}
G x_0 \cdot y \le \sqrt{x_0 \cdot G x_0}\sqrt{  y\cdot G y}<(t^*)^2 =x_0\cdot G x_0,
\end{equation}
thus 
\begin{equation}\label{proof_theo_main_11}
G x_0 \cdot (x_0-y)>0 \quad \text{ in } t^*E_A.
\end{equation}
Hence, by \eqref{proof_theo_main_11},
\begin{equation}
G x_0\cdot \nabla N_{(t^* E_A)\setminus K}(x_0)=-\frac{1}{d\omega_d}\int_{(t^*E_A)\setminus K}\frac{Gx_0\cdot (x_0-y)}{|x_0-y|^d}\, dy\le0
\end{equation}
with a strict inequality if $|(t^*E_A)\setminus K| \neq 0$. By \eqref{proof_theo_main_10} the inequality cannot be strict, thus we conclude that  $|(t^*E_A)\setminus K| =0$.
Furthermore, 
\begin{equation}
1=|K|=|t^*E_A|=(t^*)^d
\end{equation}
which implies $t^*=1$ and so $K=E_A$ up to a set of volume $0$. 
In conclusion, we have shown that 
\begin{equation}\label{proof_theo_main_11_5}
W=-\Tr(S) \nabla N_{E_A}\quad \text{a.e. in } \R^d,
\end{equation}
with $E_A$ as in \eqref{def_EA}. 

Next, we show that $V=UW$. To this end, we observe that the map $P:=UU^T$ is the orthogonal projection onto ${\rm Im} (U)$. Indeed, 
\begin{equation}
(x-UU^Tx)\cdot y=(x-UU^Tx)\cdot Uz=U^T(x-UU^Tx)\cdot z=0
\end{equation}
for any $y \in {\rm Im} (U)$, since $U^TU={\rm Id}_d$. Hence, 
\begin{equation}
|\nabla V|^2=|P \nabla V|^2 +|({\rm Id}_k-P)\nabla V|^2=|U^T\nabla V|^2 +|({\rm Id}_k-P)\nabla V|^2
\end{equation}
and \eqref{proof_theo_main_3} yields $({\rm Id}_k-P)\nabla V=0$. If follows that for some $c \in ({\rm Im} (U))^{\perp}$
\begin{equation}
({\rm Id}_k-P)V=c.
\end{equation}
Since on $K$, a set of volume $1$,
\begin{equation}
V(x)=Ax=USx \in {\rm Im} (U),
\end{equation}
we must have $c=0$. In conclusion,
\begin{equation}
V=PV=UU^TV=UW.
\end{equation}
Hence, we have proved \eqref{eq_Va_theo_main} in view of \eqref{proof_theo_main_11_5}. All that it is left is to prove \eqref{eq_concidence_set_theo_main}.
To this end, let us  show that
\begin{equation}\label{proof_theo_main_12}
\nabla N_{E_A}(x) \neq -Bx \quad \text{ for any } x \not \in \overline{E_A}. 
\end{equation}
Let argue by contradiction supposing that $\nabla N_{E_A}(x)=-Bx$ for some $x \not \in \overline{E_A}$ and let
\begin{equation}
t:=\sqrt{x\cdot Gx}>1.
\end{equation}
Then $x \in \partial (t E_A)$ so that $\nabla N_{t E_A}(x)=-Bx$ and  
\begin{equation}
0=\nabla N_{E_A}(x)+Bx=\nabla N_{E_A}(x)-\nabla N_{t E_A}(x)=-\nabla N_{(tE_A)\setminus E_A}(x).
\end{equation}
Arguing as done above for $t^*E_A$ and $K$, we conclude that $|(t E_A)\setminus E_A|=0$, a contradiction to $t>1$.
In conclusion, since $U$ is injective and $V_A(x)-Ax=-\Tr(S)U(\nabla N_{E_A}(x)+Bx)$,
\begin{equation}
V_A(x)= Ax \quad \text{ if and  only if }\quad \nabla N_{E_A}(x)=-Bx,  
\end{equation}
which happens exactly in  $\overline{E_A}$ in view of  \eqref{eq_nabla_NEa_dge3}, \eqref{eq_nabla_NEa_d2} and \eqref{proof_theo_main_12}. 
The proof is thereby complete.
\end{proof}

\section*{Acknowledgements}
\noindent
G. Siclari is supported by ``Centro di Ricerca Matematica Ennio De Giorgi''.
G. Siclari is partially supported by the 2026 INdAM--GNAMPA project 2026 ``Asymptotic analysis of variational problems''.
B. Velichkov was supported by the European Research Council's (ERC) project ERC VaReg - \it Variational approach to the regularity of the free boundaries \rm and the project  ERC FiRM - \it Fine structure and regularity of moving and stationary free interfaces\rm, and the MIUR Excellence Department Project awarded to the Department of Mathematics (CUP I57G22000700001).

\bibliographystyle{acm}
\bibliography{references}	

@article{SV_blowups,
      title={On the blow-up of the vectorial Bernoulli free boundary problem}, 
      author={Giovanni Siclari and Bozhidar Velichkov},
      journal={arXiv preprint arXiv:2602.00741},
      year={2026},
      eprint={2602.00741},
      archivePrefix={arXiv},
      primaryClass={math.AP},
      url={https://arxiv.org/abs/2602.00741}, 
}

@article {CMMRSV_d2,
    AUTHOR = {Carrillo, J. A. and Mateu, J. and Mora, M. G. and Rondi, L.
              and Scardia, L. and Verdera, J.},
     TITLE = {The ellipse law: {K}irchhoff meets dislocations},
   JOURNAL = {Comm. Math. Phys.},
  FJOURNAL = {Communications in Mathematical Physics},
    VOLUME = {373},
      YEAR = {2020},
    NUMBER = {2},
     PAGES = {507--524},
      ISSN = {0010-3616,1432-0916},
   MRCLASS = {31B15 (31B35 35B35 35K59 49Q10 49Q20 74G65)},
  MRNUMBER = {4056642},
MRREVIEWER = {Angelica\ Malaspina},
       DOI = {10.1007/s00220-019-03368-w},
       URL = {https://doi.org/10.1007/s00220-019-03368-w},
}

@article {EFW_glob_obs,
    AUTHOR = {Eberle, Simon and Figalli, Alessio and Weiss, Georg S.},
     TITLE = {Complete classification of global solutions to the obstacle
              problem},
   JOURNAL = {Ann. of Math. (2)},
  FJOURNAL = {Annals of Mathematics. Second Series},
    VOLUME = {201},
      YEAR = {2025},
    NUMBER = {1},
     PAGES = {167--224},
      ISSN = {0003-486X,1939-8980},
   MRCLASS = {35R35 (31B05 31B20 35J86)},
  MRNUMBER = {4848670},
MRREVIEWER = {Mariana\ Smit Vega Garcia},
       DOI = {10.4007/annals.2025.201.1.3},
       URL = {https://doi.org/10.4007/annals.2025.201.1.3},
}

@article {D_att,
    AUTHOR = {Dive, Pierre},
     TITLE = {Attraction des ellipso\"ides homog\`enes et r\'eciproques d'un
              th\'eor\`eme de {N}ewton},
   JOURNAL = {Bull. Soc. Math. France},
  FJOURNAL = {Bulletin de la Soci\'et\'e{} Math\'ematique de France},
    VOLUME = {59},
      YEAR = {1931},
     PAGES = {128--140},
      ISSN = {0037-9484},
   MRCLASS = {99-04},
  MRNUMBER = {1504977},
       URL = {http://www.numdam.org/item?id=BSMF_1931__59__128_0},
}

@book {PSU_conv,
    AUTHOR = {Petrosyan, Arshak and Shahgholian, Henrik and Uraltseva, Nina},
     TITLE = {Regularity of free boundaries in obstacle-type problems},
    SERIES = {Graduate Studies in Mathematics},
    VOLUME = {136},
 PUBLISHER = {American Mathematical Society, Providence, RI},
      YEAR = {2012},
     PAGES = {x+221},
      ISBN = {978-0-8218-8794-3},
   MRCLASS = {35R35 (35B65 35Q91)},
  MRNUMBER = {2962060},
MRREVIEWER = {Michele\ Miranda},
       DOI = {10.1090/gsm/136},
       URL = {https://doi.org/10.1090/gsm/136},
}

@article {L_inv,
    AUTHOR = {Lewy, Hans},
     TITLE = {An inversion of the obstacle problem and its explicit
              solution},
   JOURNAL = {Ann. Scuola Norm. Sup. Pisa Cl. Sci. (4)},
  FJOURNAL = {Annali della Scuola Normale Superiore di Pisa. Classe di
              Scienze. Serie IV},
    VOLUME = {6},
      YEAR = {1979},
    NUMBER = {4},
     PAGES = {561--571},
      ISSN = {0391-173X,2036-2145},
   MRCLASS = {35R30 (35J05)},
  MRNUMBER = {563334},
MRREVIEWER = {S.\ D\"ummel},
       URL = {http://www.numdam.org/item?id=ASNSP_1979_4_6_4_561_0},
}

@article {FS_quad,
    AUTHOR = {Friedman, Avner and Sakai, Makoto},
     TITLE = {A characterization of null quadrature domains in {${\bf
              R}^N$}},
   JOURNAL = {Indiana Univ. Math. J.},
  FJOURNAL = {Indiana University Mathematics Journal},
    VOLUME = {35},
      YEAR = {1986},
    NUMBER = {3},
     PAGES = {607--610},
      ISSN = {0022-2518,1943-5258},
   MRCLASS = {31B05},
  MRNUMBER = {855176},
MRREVIEWER = {D.\ H.\ Armitage},
       DOI = {10.1512/iumj.1986.35.35031},
       URL = {https://doi.org/10.1512/iumj.1986.35.35031},
}

@article {DF_bubble,
    AUTHOR = {DiBenedetto, Emmanuele and Friedman, Avner},
     TITLE = {Bubble growth in porous media},
   JOURNAL = {Indiana Univ. Math. J.},
  FJOURNAL = {Indiana University Mathematics Journal},
    VOLUME = {35},
      YEAR = {1986},
    NUMBER = {3},
     PAGES = {573--606},
      ISSN = {0022-2518,1943-5258},
   MRCLASS = {76S05 (35R35 76V05)},
  MRNUMBER = {855175},
MRREVIEWER = {Michel\ Chipot},
       DOI = {10.1512/iumj.1986.35.35030},
       URL = {https://doi.org/10.1512/iumj.1986.35.35030},
}

@article {S_dim2,
    AUTHOR = {Sakai, Makoto},
     TITLE = {Null quadrature domains},
   JOURNAL = {J. Analyse Math.},
  FJOURNAL = {Journal d'Analyse Math\'ematique},
    VOLUME = {40},
      YEAR = {1981},
     PAGES = {144--154},
      ISSN = {0021-7670,1565-8538},
   MRCLASS = {30E99 (31A05)},
  MRNUMBER = {659788},
MRREVIEWER = {L.\ I.\ Hedberg},
       DOI = {10.1007/BF02790159},
       URL = {https://doi.org/10.1007/BF02790159},
}

@article {SW_glob_dim2,
    AUTHOR = {Salib, Anthony and Weiss, Georg S.},
     TITLE = {Classification of global solutions to the obstacle problem in
              the plane},
   JOURNAL = {Adv. Math.},
  FJOURNAL = {Advances in Mathematics},
    VOLUME = {472},
      YEAR = {2025},
     PAGES = {Paper No. 110276, 36},
      ISSN = {0001-8708,1090-2082},
   MRCLASS = {35B08 (35J05 35R35)},
  MRNUMBER = {4892696},
MRREVIEWER = {Zixiao\ Liu},
       DOI = {10.1016/j.aim.2025.110276},
       URL = {https://doi.org/10.1016/j.aim.2025.110276},
}

@article {MTV_reg_vect,
    AUTHOR = {Mazzoleni, Dario and Terracini, Susanna and Velichkov,
              Bozhidar},
     TITLE = {Regularity of the free boundary for the vectorial {B}ernoulli
              problem},
   JOURNAL = {Anal. PDE},
  FJOURNAL = {Analysis \& PDE},
    VOLUME = {13},
      YEAR = {2020},
    NUMBER = {3},
     PAGES = {741--764},
      ISSN = {2157-5045,1948-206X},
   MRCLASS = {35R35 (35J60 49K10 49Q20)},
  MRNUMBER = {4085121},
MRREVIEWER = {Leandro\ S.\ Tavares},
       DOI = {10.2140/apde.2020.13.741},
       URL = {https://doi.org/10.2140/apde.2020.13.741},
}

@article {TV_survey,
    AUTHOR = {Tortone, Giorgio and Velichkov, Bozhidar},
     TITLE = {Vectorial {B}ernoulli problems and free boundary systems},
   JOURNAL = {Matematica},
  FJOURNAL = {La Matematica. Official Journal of the Association for Women
              in Mathematics},
    VOLUME = {5},
      YEAR = {2026},
    NUMBER = {1},
     PAGES = {Paper No. 16, 51},
      ISSN = {2730-9657},
   MRCLASS = {35R35 (35B65 35N25 49N60 49Q10)},
  MRNUMBER = {5036204},
       DOI = {10.1007/s44007-026-00195-z},
       URL = {https://doi.org/10.1007/s44007-026-00195-z},
}

@article {EW_short_proof,
    AUTHOR = {Eberle, S. and Weiss, G. S.},
     TITLE = {Characterizing compact coincidence sets in the obstacle
              problem---a short proof},
   JOURNAL = {Algebra i Analiz},
  FJOURNAL = {Rossi\u iskaya Akademiya Nauk. Algebra i Analiz},
    VOLUME = {32},
      YEAR = {2020},
    NUMBER = {4},
     PAGES = {137--145},
      ISSN = {0234-0852},
   MRCLASS = {35J15},
  MRNUMBER = {4167864},
MRREVIEWER = {Jia-Feng\ Liao},
       DOI = {10.1090/spmj/1665},
       URL = {https://doi.org/10.1090/spmj/1665},
}

@article {DF_dge3,
    AUTHOR = {Di Fratta, Giovanni},
     TITLE = {The {N}ewtonian potential and the demagnetizing factors of the
              general ellipsoid},
   JOURNAL = {Proc. A},
  FJOURNAL = {Proceedings A},
    VOLUME = {472},
      YEAR = {2016},
    NUMBER = {2190},
     PAGES = {20160197, 7},
      ISSN = {1364-5021,1471-2946},
   MRCLASS = {78A25},
  MRNUMBER = {3522517},
       DOI = {10.1098/rspa.2016.0197},
       URL = {https://doi.org/10.1098/rspa.2016.0197},
}
\end{document}